\documentclass[11pt,a4paper,reqno]{amsart}
\usepackage{amsmath,amssymb,amsfonts,amsthm}
\usepackage{a4wide}
\usepackage{color}
\usepackage{pdfsync}
\usepackage{graphicx}
\usepackage{hyperref}
\textheight 24.2cm
\textwidth 17cm 
\topmargin -0.5cm
\oddsidemargin  -0.5cm
\evensidemargin  -0.5cm

\newtheorem{theorem}{Theorem}[section]
\newtheorem{lemma}[theorem]{Lemma}
\newtheorem{proposition}[theorem]{Proposition}

\newtheorem{remark}[theorem]{Remark}
\numberwithin{equation}{section}
\makeatletter

\@addtoreset{equation}{section}
\makeatother



\newcommand{\N}{{\mathbb N}}
\newcommand{\Z}{{\mathbb Z}}

\newcommand{\R}{{\mathbb R}}

\newcommand{\pa}{{\partial}}
\newcommand{\na}{{\nabla}}


\newcommand{\pd}{\partial}

\newcommand{\loc}{{\rm loc}}

\newcommand{\supp}{{\rm supp}\,}

\newcommand{\cal}[1]{\mathcal{#1}} 

\def\curl{{\rm curl }\,}
\def\div{{\rm div }\,}

\def\capa{\mathrm{cap}}

\title[The Two Dimensional  Euler equations on singular exterior domains]{The Two Dimensional  Euler equations \\ on singular exterior domains}
\author[D. G\'erard-Varet \& C. Lacave]{David G\'erard-Varet \& Christophe Lacave}

\address[D. G\'erard-Varet]{Universit\'e Paris-Diderot (Paris 7)\\
Institut de Math\'ematiques de Jussieu - Paris Rive Gauche\\
UMR 7586 - CNRS\\
B\^atiment Sophie Germain \\
Case 7012\\
75205 PARIS Cedex 13\\
France.} \email{gerard-varet@math.jussieu.fr}

\address[C. Lacave]{Universit\'e Paris-Diderot (Paris 7)\\
Institut de Math\'ematiques de Jussieu - Paris Rive Gauche\\
UMR 7586 - CNRS\\
B\^atiment Sophie Germain \\
Case 7012\\
75205 PARIS Cedex 13\\
France.} \email{lacave@math.jussieu.fr}

\date{\today}

\begin{document}
\maketitle
\begin{abstract}
This  paper is a follow-up of article \cite{GV-L}, on the existence of global weak solutions to the two dimensional Euler equations in singular domains.  In \cite{GV-L}, we have established the existence of weak solutions for a large class of bounded domains, with  initial vorticity in $L^p$ ($p>1$). For unbounded domains, we have proved a similar result only  when the initial vorticity is in $L^p_{c}$ ($p>2$) and when the domain is the exterior of a single obstacle. The goal here is to retrieve these two restrictions: we consider general initial vorticity in $L^1\cap L^p$ ($p>1$),  outside  an arbitrary  number of obstacles (not reduced to points).
\end{abstract}


\section{Introduction}

The motion of a 2D ideal incompressible flow in a domain $\Omega$ is governed by the Euler equations:
\begin{eqnarray}
\partial_{t} u + u\cdot \nabla u =-\nabla p && \forall (t,x) \in (0,\infty)\times \Omega, \label{Euler1}\\
\div u =0 && \forall (t,x) \in [0,\infty)\times \Omega,\label{Euler2}\\
 u \cdot \nu =0 && \forall (t,x) \in [0,\infty)\times \partial\Omega, \label{Euler3}\\
 u(0,x) = u^0(x) && \forall x \in \Omega, \label{Euler4}
\end{eqnarray}
where $u=(u_{1}(t,x_{1},x_{2}),u_{2}(t,x_{1},x_{2}))$ is the velocity, $p=p(t,x_{1},x_{2})$ is the pressure and $\nu$ the unit normal vector to $\partial\Omega$ pointing outside the fluid domain.

\medskip
Construction of solutions to these equations has been a constant concern over the last century. Let us mention the pioneering paper of Wolibner \cite{Wolibner} on smooth solutions in smooth bounded domains (see also Kato \cite{Kato} and Temam \cite{Temam75}). Among many others, one can also mention the work of McGrath \cite{McGrath}, resp.  Kikuchi \cite{Kikuchi}  on the existence of smooth solutions in  the whole plane, resp. in smooth 2D exterior domains. Of course, a key feature of the 2D equations is the conservation of vorticity  
 $\omega:=\curl u = \partial_{1}u_{2}-\partial_{2} u_{1}$, which verifies a transport equation:
\begin{equation}\label{Euler5}
\partial_{t} \omega + u\cdot \nabla \omega =0 \qquad \forall (t,x) \in (0,\infty)\times \Omega.
\end{equation}
It propagates bounds on the vorticity, that allows to go from local to global in time solutions,  and to introduce several notions of weak solutions. The most famous result in this direction is certainly the one of Yudovich \cite{Yudo}: it provides existence and uniqueness of global solutions with bounded vorticity, in various domains (with smooth or without boundaries), see also Bardos \cite{Bardos} for another existence proof. Weaker notions of global solutions have been developed since: solutions with $L^p$ vorticity, as introduced by DiPerna and Majda \cite{DiPernaMajda}, or solutions whose vorticy is a signed measure in $H^{-1}$, built by Delort \cite{Delort}. Note that uniqueness of these weaker solutions is unknown. 

\medskip
We stress that in above studies, the domain $\Omega$ is always taken smooth. This is due to mathematical technicalities, such as ensuring the $L^p$ continuity of Riesz transforms over $\Omega$. This smoothness assumption is of course natural in the context of smooth solutions, but it is a big restriction with regards to weak solutions. Indeed, in many situations, the singularities of the flow are created by the flow domain itself, for instance the irregularity of a solid obstacle. Hence, it is highly desirable to extend the theory of weak solutions to general domains. 

\medskip
This issue has been investigated by various authors in the recent years. One can mention for instance the work of M. Taylor \cite{Taylor}, who established the existence of weak solutions for convex and bounded $\Omega$. The convexity assumption is related there to regularity results for the Laplace equation $\Delta \psi = \omega$ in $\Omega$, set with Dirichlet conditions. It is indeed well-known that for convex $\Omega$,  $\omega \in L^2(\Omega) \Rightarrow \psi \in H^2(\Omega)$. This yields the continuity of the Riesz Transform over $L^2$, and so classical methods for  weak solutions can be transfered to this case (see also \cite{BDT} for further refined results in convex and bounded domains).

One can also mention the work \cite{L-IHP} by the second author, that establishes the existence of global solutions with $L^\infty$ vorticity in the exterior of a smooth Jordan curve. This existence result is obtained through an approximation process, considering smooth obstacles shrinking to the curve. It uses several tools of complex analysis, notaby conformal transform, to derive uniform bounds on the sequence of approximate solutions. Note that another related asymptotics, namely obstacles shrinking to a point falls into the scope of a very active area of research, as testified for instance by the recent works \cite{BLM,ILL,L-CPDE,LLL,LionsMasmoudi,Lopes,MikelicPaoli}.

Finally, let us cite the recent papers \cite{BDT,DT,Lac-uni,LMW} on the Euler flow in polygons. In such polygonal domains, one can even go further in the analysis (using elliptic theory in domains with corners), and deal with the uniqueness of Yudovitch solutions.

\medskip
Still, the studies just mentioned have strong limitations on the geometry of the domain $\Omega$.  In this respect, a big step forward  has been made in our recent paper \cite{GV-L}. There, we have proved the existence of global weak solutions with  $L^p$ vorticity  for a large class of non-smooth open sets.  Namely, we have considered two kinds of sets: 
\begin{itemize}
\item Bounded domains: 
$$ \Omega = \widetilde \Omega \setminus \bigcup_{i=1}^k {\cal C}^i $$ 
where $\widetilde \Omega$ is any simply connected open set, and the ${\cal C}^i$'s are disjoint obstacles. 

\smallskip
\item Exterior domains 
$$ \Omega = \R^2 \setminus {\cal C}  $$
for a single obstacle ${\cal C}$.
\end{itemize}
 Let us point out that in \cite[p. 133]{GV-L},  obstacles are defined as  compact connected subsets {\em of positive capacity}. As will be shown below, {\it cf} Proposition \ref{prop:equivalence}, this is the same as simply saying that the connected compact subsets are not reduced to  points.  Hence, the bounded domains considered in \cite{GV-L} are very general. The existence of solutions with $L^p$ vorticity ($p > 1$) is proved using in a crucial way the notion of $\gamma$-convergence of open sets, described extensively in \cite{Henrot} (see \cite[App. C]{GV-L} for a short summary).
   
 However, in the case of exterior domains, the existence result of \cite{GV-L} is not so strong. First, we consider only a single obstacle. Second, we must assume that the vorticity is in $L^p_c(\Omega), p > 2$. These restrictions are due to our method of proof, as will be explained later on.  
 
\medskip
{\it Our goal in this paper is to get rid of these limitations. We shall show the existence of global weak solutions for vorticities in $L^1\cap L^p$ ($p>1$), in the exterior of an arbitrary finite number of  obstacles (not reduced to points).} Again, it will improve \cite{GV-L} because we consider  several obstacles, we do not  assume that the initial vorticity is compactly supported, and we treat indices $p\in (1,2]$. Hence, we will reach the  generality obtained for bounded domains \cite[Theo. 1]{GV-L}.

\bigskip
Let us now state rigorously our result. We consider domains of the type
\begin{equation}\label{Omegatype}
\Omega = \R^2\setminus \Big(\bigcup_{i=1}^k \cal{C}^{i}\Big),\quad k\in \N
\end{equation}
\begin{equation}\label{Omegatype2}
\textrm{where} \quad  \cal{C}^1,\dots,\cal{C}^k \text{ are disjoints connected compact subsets not reduced to points.}
\end{equation}

\medskip
We consider initial data satisfying 
\begin{equation} \label{initialdata}
u^0 \in L^2_{\loc}(\overline{\Omega}), \quad u^0 \rightarrow 0 \:  \mbox{ as } \: |x| \rightarrow +\infty,  \quad \curl u^0 \in L^1\cap L^p(\Omega) 
\end{equation}
for some  $p \in (1,\infty]$.  We want also $u^0$ to be divergence-free and  tangent to the boundary. Nevertheless, due to the irregularity of $\Omega$, this tangency condition has to be expressed in a weak sense. Namely, we ask 
\begin{equation}\label{imperm}
 \int_\Omega u^0 \cdot \na h = 0  \quad \text{for all } h \in H^1_{\loc}(\Omega) \: \text{ such that 
 $\na h \in L^2(\Omega)$ and } h(x) = 0 \: \text{for large $x$}.   \end{equation}
Note that this exactly amounts to the usual divergence-free and tangency condition when $u^0$ and $\Omega$ are smooth: indeed, integrating by parts, we find in such case
$$  \int_\Omega u^0 \cdot \na h = 0 \: = \: \int_{\pa \Omega} u^0 \cdot \nu \, h \: - \: \int_\Omega \div u^0 \,  h, 
$$
this identity being valid for all functions $h\in \cal{D}(\R^2)$. This easily implies   that $\div u^0 = 0$ and $u^0 \cdot \nu \vert_{\pa \Omega} = 0$. We stress that the assumptions \eqref{initialdata}-\eqref{imperm} is not restrictive: we will prove in Subsection \ref{sect conv u0} that for any function $\omega^0\in L^1\cap L^p(\R^2)$ there exists some vector fields $u^0$ verifying \eqref{initialdata}-\eqref{imperm} with $\curl u^0=\omega^0$. Let us also note that such a vector field is not always square integrable at infinity (for $\omega_0$ compactly supported, $u^0$ has a finite energy iff the sum of the circulations around each obstacles is equal to $-\int \omega_0$, which is an important constraint).

\medskip
Similarly to \eqref{imperm}, the weak form of the  divergence free \eqref{Euler2} and tangency conditions \eqref{Euler3} on the Euler solution $u$  reads:  for almost all $t$,
\begin{equation} \label{imperm2}
 \int_\Omega u(t,\cdot) \cdot  \na h  = 0 \quad \text{for all } \: h \in H^1_{\loc}(\Omega) \:  \text{ such that 
 $\: \na h \in L^2(\Omega)$ and } \: h(x) = 0 \: \text{for large $x$}.
\end{equation}
Finally, the weak form  of the momentum equation \eqref{Euler1}  reads:
\begin{equation} \label{Eulerweak}
\mbox{for all } \, \varphi \in {\cal D}\left([0, +\infty) \times \Omega\right) \mbox{ with } \div \varphi = 0, \quad  \int_0^{\infty} \int_\Omega \left( u \cdot \pd_t \varphi +   (u \otimes u) : \na \varphi \right)  = -\int_\Omega u^0 \cdot \varphi(0, \cdot).
\end{equation}
Our main theorem is
\begin{theorem} \label{theoremmain}
Assume that $\Omega$ is of type  \eqref{Omegatype}-\eqref{Omegatype2}. Let $p \in (1,\infty]$ and $u^0$ as in \eqref{initialdata}-\eqref{imperm}. Then there exists  
 $$u \in L^\infty_{\loc}(\R^+; L^2_{\loc} (\overline{\Omega})), \: \mbox{ with } \: \curl u \in   L^\infty(\R^+; L^1\cap L^p (\Omega))$$
which is a global weak solution of \eqref{Euler1}-\eqref{Euler4} in the sense of \eqref{imperm2} and \eqref{Eulerweak}.
\end{theorem}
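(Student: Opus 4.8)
\medskip\noindent\textbf{Plan of the proof.} The idea is to build the solution by approximation: solve the Euler equations on a sequence of \emph{smooth} exterior domains $\Omega_n$ that degenerate to $\Omega$, and then pass to the limit. Concretely, one takes $\Omega_n = \R^2\setminus\bigcup_{i=1}^k \cal{C}^i_n$, with each $\cal{C}^i_n$ a smooth obstacle containing $\cal{C}^i$ and shrinking down to it. By Proposition \ref{prop:equivalence}, the hypothesis \eqref{Omegatype2} that no $\cal{C}^i$ is reduced to a point means each $\cal{C}^i$ has positive capacity, which is exactly what makes $\Omega_n$ $\g$-converge to $\Omega$. In parallel one constructs approximate initial velocities $u^0_n$ on $\Omega_n$: starting from $\omega^0:=\curl u^0\in L^1\cap L^p$, one truncates and mollifies it to get a smooth $\omega^0_n$, and reconstructs $u^0_n$ through a Biot--Savart formula on $\Omega_n$ with circulations around the $\cal{C}^i_n$ matching those of $u^0$ around the $\cal{C}^i$; one arranges that $u^0_n$ is smooth, divergence free, tangent to $\pa\Omega_n$, and $u^0_n\to u^0$ in $L^2_{\loc}(\overline{\Omega})$. (That \eqref{initialdata}--\eqref{imperm} can be realized at all for a prescribed $\omega^0$ is itself part of the argument, settled in Subsection~\ref{sect conv u0}.)

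On each smooth $\Omega_n$, classical results (Kikuchi \cite{Kikuchi}, Yudovich \cite{Yudo}) provide a global — here even classical, since $\omega^0_n$ is smooth — solution $u_n$ with $\curl u_n=\omega_n$. Conservation of vorticity along the flow gives $\|\omega_n(t)\|_{L^1\cap L^p(\Omega_n)}=\|\omega^0_n\|_{L^1\cap L^p}\le C$, uniformly in $n$ and $t$, and Kelvin's theorem keeps the circulations constant. The crucial quantitative point is then a \emph{uniform} bound $u_n\in L^\infty_{\loc}(\R^+;L^2_{\loc}(\overline{\Omega_n}))$: this is an estimate on the Biot--Savart kernel (equivalently the Green function) of $\Omega_n$, which one must control via the $\g$-convergence of the $\Omega_n$, the $L^1\cap L^p$ bound on the vorticity, and a careful treatment of the slow decay at infinity. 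Since the vorticity is not compactly supported, the velocity need not be square integrable near infinity; one splits the kernel into a near-field, handled by $L^p$ Calder\'on--Zygmund-type bounds, and a far-field, handled by the $L^1$ norm, while keeping track of the $k$-dimensional space of harmonic fields carrying the circulations.

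Equipped with these bounds, one extracts a subsequence with $\omega_n\rightharpoonup\omega$ weakly-$\ast$ in $L^\infty(\R^+;L^1\cap L^p)$ and $u_n\rightharpoonup u$ weakly-$\ast$ in $L^\infty_{\loc}(\R^+;L^2_{\loc})$. The momentum equation bounds $\pa_t u_n$ in $H^{-m}_{\loc}$ for some $m$, so an Aubin--Lions argument on each compact subset of $\overline{\Omega}$ upgrades the velocity convergence to \emph{strong} convergence in $L^2_{\loc}([0,\infty)\times\overline{\Omega})$; this is where $p>1$ enters, through the compact embedding $W^{1,p}_{\loc}\hookrightarrow\hookrightarrow L^2_{\loc}$ (it is the borderline $p=1$ that would instead require Delort-type tools, {\it cf.} \cite{Delort}). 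Strong $L^2_{\loc}$ convergence then lets one pass to the limit in $u_n\otimes u_n\to u\otimes u$, giving \eqref{Eulerweak}; the divergence-free/tangency condition \eqref{imperm2} is passed to the limit once more via $\g$-convergence, by approximating a test $h$ admissible for $\Omega$ by tests $h_n$ admissible for $\Omega_n$ with $\na h_n\to\na h$ in $L^2$, so that $\int_{\Omega_n}u_n\cdot\na h_n\to\int_{\Omega}u\cdot\na h=0$.

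The main obstacle is the uniform velocity bound of the second step on the \emph{multiply-connected} exterior domains $\Omega_n$. Unlike the single-obstacle case of \cite{GV-L}, one cannot conformally map $\Omega_n$ onto the exterior of a disk; the Green function of $\Omega_n$ must be controlled directly through $\g$-convergence, simultaneously with the non-integrable tail of $u_n$ coming from the merely $L^1$ (rather than $L^p_c$) vorticity and with the finite-dimensional family of harmonic fields encoding the circulations around the several obstacles. Once that is in place, the remaining steps — existence on smooth domains, the conservation laws, the Aubin--Lions compactness, and the passage to the limit in the weak formulation — follow a fairly standard pattern.
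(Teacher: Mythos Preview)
Your overall architecture is right and matches the paper: approximate $\Omega$ by smooth exterior domains $\Omega_n$, solve classically there via Kikuchi, use vorticity conservation and Kelvin's theorem for the a priori bounds, then pass to the limit. But the heart of the matter --- the uniform $L^2_{\loc}$ bound on $u_n$ --- is where your sketch is too vague, and in fact names the wrong tool. You propose to ``control the Green function of $\Omega_n$ directly through $\gamma$-convergence''. This is not how the bound is obtained. The $\gamma$-convergence of $\Omega_n$ to $\Omega$ is a statement about \emph{limits} of solutions of Dirichlet problems; it does not by itself yield \emph{uniform} estimates on the Biot--Savart operator of $\Omega_n$. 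In the paper the uniform bound comes from an entirely different toolbox: one writes the Hodge decomposition $u_n=\nabla^\perp\psi^0_n+\sum_i\alpha^i_n\nabla^\perp\psi^i_n$, and for each stream function one first solves the \emph{single-obstacle} problem explicitly via the Riemann map $\mathcal{T}^i_n$ and controls it using Caratheodory convergence (Proposition~\ref{Caratheo}). The passage from one obstacle to several is then made, not by any kernel splitting, but by a maximum principle on the harmonic difference $\psi_n = \psi^0_n-\tilde\psi^0_n$, applied after an \emph{inversion} $x\mapsto x/|x|^2$ that turns the exterior domain into a bounded one (Lemma~\ref{max princ}). The final $H^1$ bounds near the boundary come from the uniform Poincar\'e inequality of Lemma~\ref{Poincare}, which is where the positive-capacity hypothesis enters. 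Your ``near-field/far-field'' heuristic does not capture any of this, and in particular gives no mechanism for controlling the stream function uniformly across the several holes.

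On the compactness step your proposal also diverges from the paper, though less seriously. You claim that Aubin--Lions yields \emph{strong} $L^2_{\loc}$ convergence of $u_n$ via the embedding $W^{1,p}_{\loc}\hookrightarrow\hookrightarrow L^2_{\loc}$. On compact subsets of $\Omega$ this can indeed be justified by interior elliptic regularity, and since the test functions in \eqref{Eulerweak} are supported away from $\partial\Omega$ it would suffice to pass to the limit in the nonlinearity. The paper instead decomposes $u_n = P_{\Omega'}u_n + \nabla q_n$ on a smooth $\Omega'\Subset\Omega$, obtains strong compactness only of the Leray-projected part, and disposes of $\nabla q_n\otimes\nabla q_n$ through the algebraic identity $\int \nabla q\otimes\nabla q:\nabla\varphi=0$ for harmonic $q$ and divergence-free $\varphi$. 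Either route works here; yours is arguably more direct for $p>1$, while the paper's has the advantage of not invoking Calder\'on--Zygmund-type interior estimates. Finally, your description of how \eqref{imperm2} is recovered (approximating $h$ by $h_n$) is not quite the paper's argument: the paper instead shows that the limiting stream functions land in $H^1_0(\Omega)$ via $\gamma$-convergence, and deduces the tangency condition from that.
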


Moreover, the solution of the theorem has a Hodge-De Rham decomposition, the weak circulations are conserved, and we have the two following estimates:
\begin{equation}\label{finalenergy}
\| \curl u \|_{L^\infty(\R^+,L^p(\Omega))} \leq  \| \curl u^{0} \|_{L^p}, \quad \| \curl u \|_{L^\infty(\R^+,L^1(\Omega))} \leq  \| \curl u^0 \|_{L^1}
\end{equation}
More details will be given in due course.

\begin{remark}
If $p\geq 4/3$, let us stress that  such a solution is also a solution of the vorticity equation \eqref{Euler5} in the sense of distributions, namely,
\begin{equation} \label{vorticityweak}
\mbox{for all } \, \psi \in {\cal D}\left([0, +\infty) \times \Omega\right), \quad  \int_0^{\infty} \int_\Omega \left( \omega  \pd_t \psi +   \omega u \cdot \nabla\psi \right)  = -\int_\Omega \omega^0  \psi(0, \cdot).
\end{equation}
Indeed, for any $\psi \in {\cal D}\left([0, +\infty) \times \Omega\right)$,  $\varphi:=\nabla^\perp \psi$ is a test function for which \eqref{Eulerweak} holds true. As $u$ is uniformly square integrable on the support of $\varphi$ and $\curl u\in L^\infty(\R^+,L^{4/3}(\Omega))$ standard elliptic estimates imply that $u$ belongs to $W^{1,4/3}$ hence to $L^{4}$ on the support of $\varphi$. Then an integration by parts implies \eqref{vorticityweak}.
\end{remark}

\begin{remark}
As in \cite{GV-L}, the main point is that we assume nothing about the regularity of the boundary (the obstacles can  be as exotic as a Koch snowflake). This is made possible by our method of proof, based on the $\gamma$-convergence theory: it only requires the velocity to be in $L^2$ near the boundary. Of course, it is also because we deal only with existence issues. Uniqueness of weak solutions requires in general   more regularity (a uniform bound of $\|u\|_{W^{1,p}}/(p\ln p)$ for large $p$ or a log-lipschitz estimate up to the boundary) which cannot be obtained without some stronger assumptions on the domains. Even in bounded domains, Jerison and Kenig \cite{Kenig} exhibited an example of $\omega$ smooth, $\partial\Omega\in C^1$ and where $Du$ is not integrable (where $u=\nabla^\perp\psi$ with $\psi$ solution of the Dirichlet problem $\Delta\psi=\omega$). Hence to prove the uniqueness, the authors in  \cite{BDT,DT,Lac-uni,LMW} have to assume that $\partial \Omega$ is smooth, except in a finite number of points.
\end{remark}
Let us eventually comment on the proof of Theorem \ref{theoremmain}. The basic idea is to construct a sequence of approximate smooth domains $\Omega_n$ and initial data $u^0_n$, that generate smooth solutions $u_n = \na^\perp \psi_n$. The point is to derive uniform bounds on $\psi_n$, starting from the $L^p$ uniform bound on the vorticity $\omega_n = \Delta \psi_n$. In \cite{GV-L}, we used  a sort of  Poincar\'e inequality on large balls $B(0,R)$ containing the obstacle. There, the fact that the initial vorticity was supported in such a  ball was crucial. Furthermore,  it was  necessary to show that the compact support of $\omega_n$ was controlled uniformly in $n$ at later times.  We relied here on an explicit representation of $\psi_n$ in terms of $\omega_n$. This representation, valid only for a single obstacle, involved a biholomorphism $\cal{T}_n$ sending the exterior of $\Omega_n$ to the exterior of the unit disk. The fact that the vorticity was zero at infinity  (and in $L^p$, $p > 2$ !) was again useful, to handle this representation formula. The originality in the present work is to produce an alternative proof for the uniform bound. Loosely, we bring together several tools developed in \cite{GV-L} (uniform Poincar\'e inequality in the exterior of one obstacle, Caratheodory convergence) with a new  inversion argument inspired from \cite{LLL}.

\medskip
The outline of the paper is as follows. In Section \ref{sect 2}, we explain how to approximate $\Omega$ and $u^0$ by some smooth $\Omega_n$ and $u^0_n$. Section \ref{sect 3} is the central one: it shows how to obtain  estimates on the Euler  approximations $u_n$, uniformly in $n$. This allows to complete the proof of the existence of weak solutions in $\Omega$, through compactness arguments (Section \ref{sect 4}).

\section{Geometry and initial data approximation}\label{sect 2}

We begin this section by recalling that a domain of type \eqref{Omegatype}-\eqref{Omegatype2} can be approximated in the Hausdorff topology by smooth domains. 

\begin{proposition}{\cite[Prop. 1]{GV-L}}\label{Om conv}
Let $\Omega$ be of type \eqref{Omegatype}-\eqref{Omegatype2}. Then $\Omega$ is the limit of a sequence
\[
\Omega_{n}:= \R^2 \setminus \Big( \bigcup_{i=1}^k \overline{O_{n}^{i}}\Big)
\]
where the $O_n^i$ are  smooth Jordan domains, with  $\overline{O_n^i}$ converging to $\cal{C}^i$ in the  Hausdorff topology.
\end{proposition}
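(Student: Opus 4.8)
The statement is purely geometric, so I would argue obstacle by obstacle. Since $\cal{C}^1,\dots,\cal{C}^k$ are pairwise disjoint compact sets, there is $\rho_0>0$ with $\mathrm{dist}(\cal{C}^i,\cal{C}^j)>3\rho_0$ for $i\neq j$; if all the sets built around the various obstacles stay inside $\rho_0$-neighbourhoods of them, they are automatically disjoint, $\Omega_n$ has the announced form, and the Hausdorff convergence $\Omega_n\to\Omega$ (i.e.\ $\bigcup_i\overline{O_n^i}\to\bigcup_i\cal{C}^i$ for the Hausdorff distance on compacts) follows from the individual convergences $\overline{O_n^i}\to\cal{C}^i$. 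Thus it suffices to fix one connected compact set $\cal{C}\subset\R^2$ not reduced to a point and, for each $\e>0$, to produce a smooth Jordan domain $O$ with $d_H(\overline O,\cal{C})\le\e$.

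First I would regularize the distance function $d(x):=\mathrm{dist}(x,\cal{C})$, which is $1$-Lipschitz: setting $d_\eta:=d*\chi_\eta\in C^\infty(\R^2)$ for a standard mollifier, one has $\|d_\eta-d\|_\infty\le\eta$ and $d_\eta\to+\infty$ at infinity. By Sard's theorem almost every $t>0$ is a regular value of $d_\eta$, so fixing such a $t\in(2\eta,3\eta)$ the open set $\{d_\eta<t\}$ has smooth compact boundary and satisfies $\cal{C}\subset\{d<\eta\}\subset\{d_\eta<t\}\subset\{d<4\eta\}$. Let $U$ be the connected component of $\{d_\eta<t\}$ containing the connected set $\{d<\eta\}$, hence containing $\cal{C}$. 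Then $U$ is a bounded connected open set with smooth boundary, necessarily a finite disjoint union of smooth Jordan curves: an outer one $\gamma_0$ together with $\gamma_1,\dots,\gamma_m$ bounding the bounded components (``holes'') $H_1,\dots,H_m$ of $\R^2\setminus U$; moreover $\cal{C}\subset U\subset\{x:d(x,\cal{C})\le4\eta\}$.

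If $m=0$ then $U$ is already a smooth Jordan domain. In general one cannot simply fill the holes: for $\cal{C}$ a circle the holes do not shrink as $\eta\to0$, so a filled domain would not Hausdorff-converge to $\cal{C}$. Instead, using that $U$ is connected, I would choose for each $j$ a smooth embedded arc $\beta_j$ joining a point of $\gamma_j$ to a point of $\gamma_0$, transverse to $\partial U$ and meeting it only at its endpoints, with $\beta_1,\dots,\beta_m$ pairwise disjoint (general position in the planar domain $U$). For $\delta>0$ small, let $O_\delta$ be obtained from $U\setminus\overline{\bigcup_j N_\delta(\beta_j)}$, where $N_\delta$ denotes the $\delta$-tubular neighbourhood, by smoothing its finitely many corners at a scale $\ll\delta$. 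Removing the $j$-th thin channel merges $\gamma_j$ with the current outer boundary component without disconnecting the set; after the $m$ steps, $O_\delta$ is a connected bounded open set whose boundary is a single smooth Jordan curve, and it is the bounded component of the complement of that curve, i.e.\ a smooth Jordan domain, with $O_\delta\subset U\subset\{d(\cdot,\cal{C})\le4\eta\}$.

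Finally, $\sup_{x\in\overline{O_\delta}}d(x,\cal{C})\le4\eta$, while any $y\in\cal{C}\subset U$ either lies in $O_\delta$ or is within $O(\delta)$ of one of the arcs $\beta_j$ (the corner-smoothing taking place far from $\cal{C}$), so $\sup_{y\in\cal{C}}d(y,\overline{O_\delta})\le C\delta$. Hence $d_H(\overline{O_\delta},\cal{C})\le4\eta+C\delta$; choosing $\eta=\eta_n\to0$, then $\delta=\delta_n\to0$, and carrying this out around each $\cal{C}^i$ with $\eta_n<\rho_0$, produces the required sequence $\Omega_n$. In this scheme the only genuinely delicate point is reconciling simple connectedness with Hausdorff convergence: the naive hole-filling fails, and one must cut channels whose width is controlled against the thickness $\eta$ of the neighbourhood; the remaining ingredients (Sard, transversality and general position, corner smoothing, the planar structure of smooth domains) are routine.
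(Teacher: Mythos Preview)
The paper does not provide its own proof of this proposition: it is simply quoted from \cite[Prop.~1]{GV-L}. So there is no in-paper argument to compare with, and your self-contained construction is a reasonable substitute.

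Your argument is essentially correct. The reduction to a single obstacle via the separation $\rho_0$ is clean, and the Sard/mollified-distance step producing a smooth bounded neighbourhood $U$ of $\cal{C}$ is standard. The crucial observation --- that one cannot simply fill the holes of $U$ (the circle example) and must instead cut thin channels connecting each inner boundary component to the outer one --- is exactly the right idea, and you justify it properly: a properly embedded arc joining two distinct boundary components of a planar domain is non-separating, so removing a thin tube around it reduces the number of boundary components by one while preserving connectedness. Iterating yields a smooth Jordan domain $O_\delta$, and the Hausdorff estimate $d_H(\overline{O_\delta},\cal{C})\le 4\eta + C\delta$ follows as you say.

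Two small points worth tightening. First, when you assert that $U$ has a \emph{single} outer boundary curve $\gamma_0$, this uses that $U$ is connected: the unbounded component of $\R^2\setminus\overline U$ then has connected boundary, hence meets $\partial U$ in a single Jordan curve; you might state this explicitly. Second, for the bound $\sup_{y\in\cal{C}}d(y,\overline{O_\delta})\le C\delta$ you need that any $y\in\cal{C}$ lying in a removed tube $N_\delta(\beta_j)$ has a point of $O_\delta$ within $O(\delta)$; this requires $\delta$ small compared to the distance from the interior of $\beta_j$ to $\partial U$, so that both sides of the tube lie in $O_\delta$. This is implicit in your choice $\delta\ll\eta$ but deserves a sentence. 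The remaining appeals to general position and corner smoothing are routine, as you note.
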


A short reminder about Hausdorff topology can be found in \cite[App. B]{GV-L}. For our analysis, we just recall here this useful property: 
\begin{equation}\label{hausdorff}
\begin{split}
\text{for any compact set $K\subset \Omega$, there exists $n_{K}>0$ such that }
K\subset \Omega_{n}, \  \forall n\geq n_{K}.
\end{split}
\end{equation}

Another geometrical feature which turns out to be crucial in our $L^2$- framework is the positive Sobolev capacity of the obstacles. The Sobolev $H^1$ capacity of a compact set $E \subset \R^2$ is defined by 
$$ {\rm cap}(E) \: := \: \inf \{ \| v \|^2_{H^1(\R^2)}, \: v \ge 1 \: \mbox{ a.e.    in a neighborhood of } E\},  
$$
with the convention that ${\rm cap}(E)= +\infty$ when the set at the r.h.s. is empty. We refer to  \cite{Henrot} for an extensive study of this notion, while the basic properties are listed in \cite[App. A]{GV-L}. In particular we recall that a point has zero capacity, whereas the capacity of a smooth Jordan arc is positive.

\medskip
By \eqref{Omegatype2}, our obstacles ${\cal C}^i$ are compact,  connected, and not reduced to a point. This is enough to ensure that they have positive capacity.  This is expressed by the following
\begin{proposition} \label{prop:equivalence}
Let $\cal{C}$ be a connected compact subset of $\R^2$. Then, we have the following equivalence:
\[
\capa (\cal{C})>0 \qquad \text{if and only if}\qquad  \cal{C} \text{ is not reduced to one point}.
\]
\end{proposition}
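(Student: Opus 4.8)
The plan is to prove both implications of the equivalence. The easy direction is: if $\cal{C} = \{x_0\}$ is a single point, then $\capa(\cal{C}) = 0$. This is the standard fact that points have zero $H^1$-capacity in $\R^2$, which is already recalled in the excerpt (``a point has zero capacity''); one can cite it directly, or sketch the usual logarithmic cutoff construction: take $v_\eps(x) = \min\bigl(1, \log(|x-x_0|/\eps^2)/\log(1/\eps)\bigr)$ truncated to vanish outside the unit ball, which is $\ge 1$ near $x_0$ and has $\|v_\eps\|_{H^1(\R^2)}^2 \to 0$ as $\eps \to 0$, using that $\int_{\eps^2 \le |y| \le \eps} |y|^{-2}\,dy \sim \log(1/\eps)$ so the Dirichlet energy is $O(1/\log(1/\eps))$.

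For the nontrivial direction, I would argue the contrapositive: assuming $\capa(\cal{C}) = 0$, show $\cal{C}$ is reduced to a point. The key geometric input is that $\cal{C}$ is \emph{connected}. If $\cal{C}$ contained two distinct points $a \ne b$, then by connectedness its diameter is positive, and in fact its projection onto some line contains a nondegenerate segment; more usefully, for every $r$ with $0 < r < |a-b|$ the circle $\{|x-a| = r\}$ must meet $\cal{C}$ (otherwise $\cal{C}$ would be disconnected by the two open sets $\{|x-a|<r\}$ and $\{|x-a|>r\}$, each containing one of $a,b$). Hence $\cal{C}$ intersects every such circle, so it is ``thick'' in a scale-invariant way. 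I would then use the standard capacity lower bound for connected sets: there is a constant $c>0$ such that any connected compact set $E \subset \R^2$ with $\mathrm{diam}(E) = d > 0$ satisfies $\capa(E) \ge c/(1 + |\log d|)$ (or one may work with the relative capacity in a fixed ball to avoid the logarithmic subtlety at large scales), which is strictly positive, contradicting $\capa(\cal{C}) = 0$.

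The main obstacle is establishing this capacity lower bound for connected sets cleanly. The cleanest route I would take: if $v \in H^1(\R^2)$ with $v \ge 1$ a.e. in a neighborhood of $\cal{C}$, then for a.e. $r \in (0, |a-b|)$ the restriction of $v$ to the circle $C_r = \{|x-a| = r\}$ lies in $H^1(C_r)$ (by the slicing/coarea property of Sobolev functions), and since $C_r \cap \cal{C} \ne \emptyset$ the function $v$ equals $1$ somewhere on $C_r$. If additionally $v$ had small average on $C_r$ we would get a contradiction via a one-dimensional Poincaré inequality on the circle: $\mathrm{osc}_{C_r} v \le C\sqrt{r}\,\|\pa_\theta v\|_{L^2(C_r)}$. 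Integrating the resulting lower bound on $\int_{C_r}|\na v|^2$ over $r \in (\tfrac14|a-b|, \tfrac12|a-b|)$, say, combined with controlling the $L^2(\R^2)$ norm of $v$ on the annulus (if $v$ is large in $L^2$ there, the $H^1$ norm is already bounded below; if it is small, then on many circles $v$ has small average and the Poincaré argument forces the Dirichlet energy to be bounded below), yields $\|v\|_{H^1(\R^2)}^2 \ge c(|a-b|) > 0$ uniformly over all admissible $v$. Taking the infimum gives $\capa(\cal{C}) \ge c(|a-b|) > 0$, the desired contradiction. One should be a little careful that the ``neighborhood of $\cal{C}$'' in the definition of capacity means $v \ge 1$ on an open set containing $\cal{C}$, so $v \ge 1$ on a neighborhood of each point of $C_r \cap \cal{C}$, which is all that is used above.
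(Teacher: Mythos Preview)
Your proposal is correct, but takes a genuinely different route from the paper's proof. The paper argues as follows: given two distinct points $x,y \in \cal{C}$, the orthogonal projection $\Pi_{[x,y]}(\cal{C})$ onto the segment $[x,y]$ is connected (continuous image of a connected set) and contains both endpoints, hence equals $[x,y]$ and has Hausdorff dimension $1$. Since Lipschitz maps do not increase Hausdorff dimension, $\dim_H(\cal{C}) \ge 1$, and one then invokes the standard fact (cited from Evans--Gariepy) that in $\R^2$ any set of Hausdorff dimension strictly greater than $0$ has positive $H^1$-capacity. This is a five-line argument relying on the black-box relation between Hausdorff dimension and Sobolev capacity.

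Your approach instead exploits connectedness through the observation that every circle $\{|x-a|=r\}$ with $0<r<|a-b|$ meets $\cal{C}$, and then builds a direct lower bound on $\|v\|_{H^1}$ for admissible $v$ via slicing and the one-dimensional Poincar\'e inequality on circles. This is more hands-on and longer, but it is self-contained (no appeal to the dimension--capacity theorem) and yields an explicit quantitative bound $\capa(\cal{C}) \ge c(|a-b|)>0$ in terms of the diameter, which the paper's argument does not provide. One small slip: you write that $v$ ``equals $1$'' at a point of $C_r$, whereas the hypothesis only gives $v \ge 1$ a.e.\ on an open arc of $C_r$; this is harmless, since after slicing $v\vert_{C_r}$ is continuous and the inequality $v \ge 1$ at some point is all your oscillation argument needs.
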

\begin{proof}
It is already well known that the capacity of a single point is zero. Conversely, assume that 
 $\cal{C}$ is a connected compact set with at least two different points $x\neq y$.  Let $\Pi_{[x,y]}({\cal C})$ the projection of 
 $\cal{C}$ over the segment $[x,y]$. It is connected, being the continuous image of ${\cal C}$, and contains $x$ and $y$. Thus, 
 $\Pi_{[x,y]}({\cal C}) = [x,y]$, and it has in particular Hausdorff dimension 1. As the projection $\Pi_{[x,y]}$ is  Lipschitz, and as Lipschitz functions decrease the Hausdorff dimension, we deduce that the Hausdorff dimension of ${\cal C}$ is greater than $1$. This implies that the capacity of ${\cal C}$ is positive, see \cite{Evans}. 
\end{proof} 
The positive  capacity of our obstacles is important notably  because it provides a uniform Poincar\'e  inequality for functions that vanish at the (regularized) obstacles. More precisely:

\begin{lemma}{\cite[Lem. 1]{GV-L}}\label{Poincare}
Let $\cal{C}$ be a connected compact subset of $\R^2$ not reduced to a point, and let $\overline{O_{n}}$ the closures of smooth Jordan domains,  converging to $\cal{C}$ in the Hausdorff sense. For any $\rho>0$ such that $\cal{C}\subset B(0,\rho)$, there exists $C_{\rho}$ and $N_{\rho}$ such that
\[
\| \varphi \|_{L^2(B(0,\rho)\setminus O_{n})} \leq C_{\rho} \| \nabla \varphi \|_{L^2(B(0,\rho)\setminus O_{n})}, \ \forall \varphi \in C^\infty_{c}( \R^2 \setminus \overline{ O_{n}}), \ \forall n\geq N_{\rho}.
\]
\end{lemma}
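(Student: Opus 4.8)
The plan is to argue by contradiction, combining an $H^1$-compactness argument with the $\gamma$-convergence theory that underlies \cite{GV-L}. Suppose the claimed inequality fails for every constant: then there exist $n_j\to\infty$ and functions $\varphi_j\in C^\infty_c(\R^2\setminus\overline{O_{n_j}})$ with $\|\varphi_j\|_{L^2(B(0,\rho)\setminus O_{n_j})}=1$ and $\|\nabla\varphi_j\|_{L^2(B(0,\rho)\setminus O_{n_j})}\to 0$. Since each $\varphi_j$ vanishes in a neighborhood of $\overline{O_{n_j}}$, extending it by zero inside $O_{n_j}$ yields a function (still denoted $\varphi_j$) in $H^1(B(0,\rho))$ with unit $L^2$-norm and with $\|\nabla\varphi_j\|_{L^2(B(0,\rho))}\to0$; in particular $(\varphi_j)$ is bounded in $H^1(B(0,\rho))$.

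First I would extract a subsequence (not relabelled) with $\varphi_j\rightharpoonup\varphi$ weakly in $H^1(B(0,\rho))$ and, by the Rellich theorem, $\varphi_j\to\varphi$ strongly in $L^2(B(0,\rho))$. This forces $\|\varphi\|_{L^2(B(0,\rho))}=1$, while $\nabla\varphi=0$ a.e.\ as the strong $L^2$-limit of $\nabla\varphi_j$; since $B(0,\rho)$ is connected, $\varphi$ equals a.e.\ a nonzero constant $c$.

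The crux is to show that, despite this, $\varphi$ must vanish on $\mathcal{C}$. Fix $\chi\in C^\infty_c(B(0,\rho))$ with $\chi\equiv1$ near $\mathcal{C}$. Then $\chi\varphi_j\in H^1_0\big(B(0,\rho)\setminus\overline{O_{n_j}}\big)$, and $\chi\varphi_j\rightharpoonup\chi\varphi$ weakly in $H^1_0(B(0,\rho))$ (the strong $L^2$-convergence handling the lower-order Leibniz terms). Because $\overline{O_{n_j}}\to\mathcal{C}$ in the Hausdorff sense and the obstacle stays a single connected component, the bounded open sets $B(0,\rho)\setminus\overline{O_{n_j}}$ $\gamma$-converge to $B(0,\rho)\setminus\mathcal{C}$ (see \cite{Henrot}, or the summary in \cite[App.~C]{GV-L}); equivalently, the spaces $H^1_0\big(B(0,\rho)\setminus\overline{O_{n_j}}\big)$ converge in the sense of Mosco, and the weak-limit part of that convergence gives $\chi\varphi\in H^1_0\big(B(0,\rho)\setminus\mathcal{C}\big)$. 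Hence $\chi\varphi=0$ quasi-everywhere on $\mathcal{C}$, and since $\chi\equiv1$ there, $\varphi=0$ q.e.\ on $\mathcal{C}$. As $\mathcal{C}$ has positive capacity by Proposition~\ref{prop:equivalence}, a constant function equal q.e.\ on $\mathcal{C}$ to $c$ can only vanish q.e.\ on $\mathcal{C}$ if $c=0$ — contradicting $\|\varphi\|_{L^2(B(0,\rho))}=1$.

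I expect the $\gamma$-convergence input to be the only non-routine ingredient; weak $H^1$-compactness, Rellich, and the capacitary dichotomy are standard. A self-contained alternative would be to prove a uniform lower bound $\capa(\overline{O_n})\ge\kappa>0$ — using that $\overline{O_n}$ is connected with $\operatorname{diam}(\overline{O_n})\ge\tfrac12\operatorname{diam}(\mathcal{C})$ for $n$ large, together with the logarithmic capacity estimate for connected sets in the plane — and then to invoke Maz'ya's capacitary Poincaré inequality on $B(0,\rho)$; there the delicate points would be the quantitative capacity bound in the two-dimensional (logarithmic) regime and the precise form of Maz'ya's inequality for functions that need not vanish on $\partial B(0,\rho)$.
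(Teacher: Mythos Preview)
The present paper does not prove this lemma; it is quoted from \cite[Lem.~1]{GV-L} without argument, so there is no in-paper proof to compare against directly.

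Your proof is correct. The Rellich-compactness contradiction is standard, and the only substantive step---that the constant weak limit $c\chi$ must lie in $H^1_0(B(0,\rho)\setminus\mathcal C)$, hence vanish q.e.\ on $\mathcal C$---follows from the $\gamma$-convergence (equivalently, Mosco convergence of the $H^1_0$ spaces) of $B(0,\rho)\setminus\overline{O_{n_j}}$ to $B(0,\rho)\setminus\mathcal C$, which holds because the complements have a uniformly bounded number of connected components. This is exactly the machinery summarized in \cite[App.~C]{GV-L} and invoked later in this very paper (Subsection~\ref{subsection_tangency}), so your argument sits squarely within the authors' toolkit. The final contradiction via Proposition~\ref{prop:equivalence} (positive capacity of $\mathcal C$) is clean.

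Your alternative sketch---a uniform lower bound $\capa(\overline{O_n})\ge\kappa>0$ from the diameter bound, combined with Maz'ya's capacitary Poincar\'e inequality---is also a valid route and may well be closer to how \cite{GV-L} originally argues; the caveats you flag (logarithmic capacity in $\R^2$, and the version of Maz'ya's inequality for functions not vanishing on $\partial B(0,\rho)$) are the right ones to watch.
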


Now, we focus on  the initial data approximation. Let $u^0$ verifying \eqref{initialdata}-\eqref{imperm}. We write $\omega^{0}:= \curl u^0$ and consider after truncation and convolution a sequence $\omega^0_{n}$ such that
\begin{equation}\label{approx om}\begin{split}
\omega^0_{n} \in C^\infty_{c}(\Omega_{n}), \   \| \omega^0_{n} \|_{L^p(\Omega_{n})}\leq \| \omega^0\|_{L^p(\Omega)},  \  \omega^0_{n} \to \omega^0 \textrm{ strongly in } & L^1(\R^2) \cap L^p(\R^2) \: \textrm{ for finite $p$}, \\
\textrm{  in } & L^q(\R^2) \: \textrm{  for all finite $q$ if $p=\infty$}.
\end{split}\end{equation}
Here, the functions defined on $\Omega_{n}$ are implicitly extended by zero, so that convergence results are stated in $\R^2$. Let us  note that the assumption $\omega^0_{n} \in C^\infty_{c}(\Omega_{n})$ is easy to achieve thanks  to \eqref{hausdorff}: any function compactly supported in $\Omega$ is compactly supported in $\Omega_{n}$ for $n$ large enough.

When the domain is not simply connected, the vorticity is not sufficient to determine uniquely the velocity: we also need to specify  the circulations around the obstacles. Due to our irregular domain $\Omega$, we need to define  these circulations in a weak sense. Therefore, we introduce some smooth cutoff functions $\chi^{i,\varepsilon}$ such that
\[
\chi^{i,\varepsilon} \equiv 1 \text{ on } \cal{C}^{i,\varepsilon},\quad  \chi^{i,\varepsilon} \equiv 0 \text{ on } \R^2\setminus\cal{C}^{i,2\varepsilon}, 
\]
where $\cal{C}^{i,\varepsilon}:=\{ x, \ d(x,\cal{C}^i)\leq \varepsilon\}$. Again by \eqref{hausdorff}, we can fix $\varepsilon$ and $n_{0}(\varepsilon)$ such that for all $n\geq n_{0}(\varepsilon)$:
\[
\chi^{i,\varepsilon} \equiv 1 \text{ on } \overline{O^{i}_{n}},\quad  \chi^{i,\varepsilon} \equiv 0 \text{ on } \overline{O^{j}_{n}}\ \forall j\neq i.
\]
For brevity, we drop the subscript $\varepsilon$. Then, for any $v \in L^1_{\loc}(\Omega)$ with $\curl v \in L^1_{\loc}$,  we define the weak circulation of $v$ around $\cal{C}^i$ by:
\begin{equation}\label{weak circu}
\gamma^i(v) := -\int_{\Omega} \chi^i \curl v - \int _{\Omega}  v \cdot \nabla^\perp \chi^i. 
\end{equation}
Let us note that this definition is independent of the choice of $\chi^i$. Also, when $\Omega$ and $v$ are smooth enough, this weak definition coincides with the standard definition of the circulation around ${\cal C}^i$: 
$$\displaystyle \gamma^i(v) =  \oint_{\pa {\cal C}^i} v \cdot \tau.$$
The above integral is considered in the counter clockwise sense, hence $\tau = -\nu^\perp$ for $\nu$ the normal vector pointing inside the obstacles.

Finally, we remark that   by our regularity assumption on $u^0$, $\gamma^i(u^0)$ is well-defined.  
It is now a classical result that, given any vorticity $\omega_0^n \in C^\infty_c(\Omega_n)$ and   real numbers $\gamma^i$, $i=1\dots k$, there exists a unique divergence free and tangent vector field $u^0_n$ over $\Omega_n$ such that $\curl u^0_n = \omega^0_n$ and $\oint_{\partial O_{n}^i} u^0_{n}\cdot \tau\,ds = \gamma^i(u^0)$ for all $i$.

Actually, we will prove in Subsection \ref{sect conv u0} that this kind of characterization also holds for more irregular domains and data:  for a given $\displaystyle \omega^0\in L^1\cap L^p(\Omega)$ and $\gamma\in \R^k$,  there exists a unique $u^0$ verifying \eqref{initialdata}-\eqref{imperm} such that 
\[
\curl u^0 = \omega^0 \text{ in } \cal{D}'(\Omega), \quad \gamma^i(u^0) = \gamma^i‚ \ \forall i=1\dots k.
\]

\medskip
After these preliminary considerations, the main point will be to show that the Euler flow $u_n$ generated by  $u^0_n$ in $\Omega_n$ converges to an Euler flow $u$ generated by $u^0$ in $\Omega$. More precisely,  our main theorem concerning existence of global weak solution for initial velocity $u^0$ (resp. $\omega^0$, $\gamma$) will be a direct consequence of the following stability result:
\begin{theorem}\label{theo stability}
Assume that $\Omega$ is of type  \eqref{Omegatype}-\eqref{Omegatype2}. Let $p \in (1,\infty]$ and $u^0$ as in \eqref{initialdata}-\eqref{imperm} (resp. let $\omega^0\in L^1\cap L^p(\Omega)$ and $\gamma\in \R^k$). For any sequences:
\begin{itemize}
\item[a)] $\Omega_{n}$ of smooth domains (as in Proposition \ref{Om conv}) converging to $\Omega$;
\item[b)] $\omega^0_{n} \in C^\infty_{c}(\Omega_{n})$, uniformly bounded in $L^p(\R^2)$ such that  $\omega^0_{n} \rightarrow \omega^0=\curl u^0$ strongly in  $L^1(\R^2)$;
\item[c)] $\gamma_{n}\in \R^k$ such that $ \gamma_{n} \rightarrow \gamma(u^0)$;
\end{itemize}
we consider the unique strong solution $(u_{n},\omega_{n}=\curl u_{n})$ of the Euler equations on $\Omega_{n}$ with initial vorticity $\omega^0_{n}$ and initial circulations $\gamma_{n}$ (Kikuchi \cite{Kikuchi}). Then we can extract a subsequence such that
\begin{itemize}
\item[1)] $\omega_{n} \rightharpoonup \omega$ weak-$*$ in $L^\infty((0,\infty);L^q(\R^2))$ for any $q\in [1,p]$;
\item[2)] $u_{n} \rightharpoonup u$ weak-$*$ in $L^\infty((0,\infty);L^2_{\loc}(\overline{\Omega}))$;
\item[3)] $\omega = \curl u$ in $\cal{D}'(\R_{+}\times\Omega)$ and $u$ is a global weak solution of \eqref{Euler1}-\eqref{Euler4} in the sense of \eqref{imperm2} and \eqref{Eulerweak} with initial data $u^0$ (resp. $\omega^0, \gamma$).
\end{itemize}
\end{theorem}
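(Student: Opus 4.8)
My plan is to follow the by-now classical compactness route for constructing weak Euler solutions: uniform a priori bounds on the smooth approximations $(u_n,\om_n)$, extraction of a limit, identification of the limit as a weak solution. Two families of uniform bounds are needed. The first, on the vorticities, is immediate: since $u_n$ is a smooth divergence-free field tangent to $\pa\Omega_n$, its flow map is measure preserving, so the transport equation \eqref{Euler5} conserves every Lebesgue norm, $\|\om_n(t,\cdot)\|_{L^q(\R^2)}=\|\om^0_n\|_{L^q(\R^2)}$ for $q\in[1,p]$ and all $t\ge0$; by b) these are bounded uniformly in $n$, which gives a uniform bound of $\om_n$ in $L^\infty(\R_+;L^1\cap L^p(\R^2))$ — and, in the limit, the estimates \eqref{finalenergy}. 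Rewriting the equation as $\pa_t\om_n=-\div(u_n\om_n)$ with $u_n\om_n$ locally bounded in $L^r$ for some $r>1$ also yields a uniform bound of $\pa_t\om_n$ in $L^\infty(\R_+;W^{-1,r}_{\loc}(\Omega))$, which I will use for time-compactness.

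The second bound — on the velocities — is the heart of the matter, and is precisely what Section \ref{sect 3} is for. Writing the Hodge--De Rham decomposition on $\Omega_n$, one has $u_n=\na^\perp\psi_n$ away from the holes, with $\psi_n$ a stream function adjusted by the harmonic fields carrying the circulations $\gamma_n^i$, and $\Delta\psi_n=\om_n$. Starting from the uniform $L^p$ bound on $\om_n$, the goal is $\sup_n\|u_n\|_{L^\infty(\R_+;L^2(K))}<\infty$ for every compact $K\subset\overline\Omega$; since the Biot--Savart law is linear in $(\om_n,\gamma_n)$ and time independent, this reduces to a functional estimate. Near the obstacles I would invoke the uniform Poincar\'e inequality of Lemma \ref{Poincare}, the single-obstacle version being enough after noticing that $\Omega_n\subset\R^2\setminus\overline{O_n^i}$. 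The delicate contributions — the behaviour at infinity, where $u_n$ need not be square integrable, and the coupling between several holes — I would treat by the inversion argument of \cite{LLL}, which transports the problem to a bounded configuration on which Carath\'eodory (kernel) convergence of the transformed domains supplies the missing uniformity. Along the way Kelvin's theorem on the smooth domain $\Omega_n$ gives conservation of the (weak) circulations, $\gamma^i(u_n(t,\cdot))=\gamma_n^i$.

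With both bounds in hand, Banach--Alaoglu yields, along a subsequence, $\om_n\rightharpoonup\om$ weak-$*$ in $L^\infty(\R_+;L^q(\R^2))$ for each $q\in[1,p]$ and $u_n\rightharpoonup u$ weak-$*$ in $L^\infty(\R_+;L^2_{\loc}(\overline\Omega))$, i.e. 1) and 2). For the nonlinear term I need strong convergence of $u_n$ on compact subsets of $\Omega$ (where the test functions live): on any ball $B\Subset\Omega$, interior elliptic regularity for $\curl u_n=\om_n$, $\div u_n=0$ turns the previous bounds into a uniform bound of $u_n$ in $L^\infty(\R_+;W^{1,\min(p,2)}(B))$, and since $\min(p,2)>1$ the embedding $W^{1,\min(p,2)}(B)\hookrightarrow L^2(B)$ is compact; combined with the bound on $\pa_t\om_n$ (hence on $\pa_tu_n$ in a negative Sobolev space), the Aubin--Lions--Simon lemma gives $u_n\to u$ strongly in $L^2_{\loc}(\R_+\times\Omega)$. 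Now I pass to the limit. Testing $\curl u_n=\om_n$ against $\phi\in\cal{D}(\R_+\times\Omega)$ and using the convergences gives $\om=\curl u$ in $\cal{D}'(\R_+\times\Omega)$. Any divergence-free $\varphi\in\cal{D}([0,\infty)\times\Omega)$ is supported in a compact subset of $\Omega$, hence, by \eqref{hausdorff}, is an admissible test function on $\Omega_n$ for $n$ large; passing to the limit in the classical identity \eqref{Eulerweak} on $\Omega_n$ then uses the weak convergence of $u_n$ for the linear terms, the strong $L^2_{\loc}$ convergence for $u_n\otimes u_n$, and the convergence $\int u^0_n\cdot\varphi(0,\cdot)\to\int u^0\cdot\varphi(0,\cdot)$, the last from $\om^0_n\to\om^0$ in $L^1$, $\gamma_n\to\gamma(u^0)$ and continuity of the regularized Biot--Savart map (which also identifies the limiting datum in the ``resp.'' formulation, via the characterization of Subsection \ref{sect conv u0}). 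Finally \eqref{imperm2} follows by passing to the limit in $\int_{\Omega_n}u_n\cdot\na h=0$ tested against truncations of admissible $h$, the only subtle point being the convergence of the admissible test spaces, which is controlled — as in Step 2 — by the Hausdorff/Carath\'eodory convergence of $\Omega_n$ to $\Omega$. This yields 3).

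The whole argument hinges on the second step: producing a velocity estimate that is uniform both in $n$ and up to an arbitrarily rough boundary, for a flow that is in general of infinite energy. Once that estimate is established, the compactness and the passage to the limit are routine, modulo the care needed for the weak divergence/tangency condition \eqref{imperm2} and for the convergence of the initial data.
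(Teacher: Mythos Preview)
Your outline gets the architecture right (uniform vorticity bounds, uniform velocity bounds, compactness, passage to the limit), but two steps in the compactness part are genuine gaps.

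\textbf{Strong convergence for the nonlinear term.} You write that the bound on $\pa_t\om_n$ gives ``hence'' a bound on $\pa_t u_n$ in a negative Sobolev space, and then apply Aubin--Lions to $u_n$ directly. This implication fails: the map $\om_n\mapsto u_n$ is the Biot--Savart law \emph{on $\Omega_n$}, which is nonlocal and $n$-dependent, so a local $W^{-1,r}$ bound on $\pa_t\om_n$ does not yield a local bound on $\pa_t u_n$. Equivalently, from the momentum equation $\pa_t u_n=-\div(u_n\otimes u_n)-\na p_n$ the pressure gradient is not locally controlled. The paper does \emph{not} prove strong $L^2_{\loc}$ convergence of $u_n$; instead, on a smooth $\Omega'\Subset\Omega$ it splits $u_n=P_{\Omega'}u_n+\na q_n$ via the Leray projector. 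The projected part has $\pa_t(P_{\Omega'}u_n)$ bounded (the pressure disappears), so Aubin--Lions gives strong convergence of $P_{\Omega'}u_n$; the remaining term $\na q_n$ is the gradient of a \emph{harmonic} function, and the algebraic identity $\int\na q\otimes\na q:\na\varphi=0$ for divergence-free $\varphi$ kills its contribution to the nonlinear term. Your direct Aubin--Lions route on $u_n$ cannot be made to work without essentially reproducing this decomposition.

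\textbf{The tangency condition \eqref{imperm2}.} Passing to the limit in $\int_{\Omega_n}u_n\cdot\na h=0$ is not available: an admissible $h\in H^1_{\loc}(\Omega)$ is in general not an admissible test function on $\Omega_n$ (neither inclusion $\Omega\subset\Omega_n$ nor $\Omega_n\subset\Omega$ holds under Hausdorff convergence), and Hausdorff or Carath\'eodory convergence says nothing about convergence of the spaces $H^1_0(\Omega_n)$. The paper's argument is different in kind: it works at the level of the stream functions and uses $\gamma$-convergence of $\Omega'\cap\Omega_n$ to $\Omega'\cap\Omega$ to show that the weak $H^1$ limits $\chi\psi^0(t,\cdot)$ and $\chi^j(\psi^i-C^{i,j})$ land in $H^1_0(\Omega)$. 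The existence and boundedness of the constants $C^{i,j}$ (the values of $\psi^i_n$ on $\pa O^j_n$) is itself a nontrivial step, resting on the positive capacity of the obstacles. From this $H^1_0$ membership, \eqref{imperm2} follows by approximation and integration by parts. Your sketch does not contain this mechanism.

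A smaller point: the identification $u^0_n\rightharpoonup u^0$ is not just ``continuity of the regularized Biot--Savart map''. One first gets $u^0_n\rightharpoonup\tilde u^0$ with the correct curl and circulations, but showing $\tilde u^0=u^0$ requires controlling the behaviour at infinity (Subsection~\ref{sect conv u0}): one proves $\tilde u^0=K_{\R^2}[\om^0]+h$ with $h$ harmonic and $\mathcal{O}(1/|x|)$, and then a Laurent-series argument forces $\tilde u^0-u^0=0$.
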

The convergences are considered by extending $u_{n}$ and $\omega_{n}$ by zero on $\cup O_{n}^i$.
Of course, if $\Omega$ is smooth and $p=\infty$, the uniqueness result of Yudovich \cite{Yudo} allows us to state that any subsequence converges to the unique weak solution, hence the previous theorem holds without extraction of a subsequence. 

\medskip
The two following sections are dedicated to the proof of Theorem \ref{theo stability}.

\section{Uniform estimates} \label{sect 3}

Let  $\Omega_{n}$, $\omega_{n}^0$ and $\gamma_{n}$ verifying the assumptions of Theorem \ref{theo stability}. By results of Kikuchi \cite{Kikuchi}, there exists  a unique global strong solution $(u_{n},\omega_{n})$ of the Euler equations \eqref{Euler1}-\eqref{Euler4}. In this smooth setting, the transport of vorticity  through equation \eqref{Euler5} implies that
\begin{itemize}
\item  the  $L^q$ norm of the vorticity is conserved:
\begin{equation}\label{om cons}
\| \omega_n(t,\cdot) \|_{L^q(\Omega_{n})} = \|\omega_{n}^0 \|_{L^q(\Omega_{n})}, \ \forall q\in [1,\infty],\ \forall t\in \R^+ ;
\end{equation}
\item the mass of the vorticity is conserved:
\begin{equation}\label{om cons2}
\int_{\Omega_{n}}\omega_n(t,\cdot)  = \int_{\Omega_{n}} \omega_{n}^0 , \ \forall t\in \R^+ .
\end{equation}
\end{itemize}
Moreover, the Kelvin's theorem gives the conservation of the circulation:
\begin{equation}\label{circ cons}
\oint_{\partial O_{n}^i }u_n(t,\cdot)\cdot \tau \, ds  =\gamma_{n}^i , \ \forall t\in \R^+,\ \forall i=1,\dots,k.
\end{equation}

\medskip
By assumption b) in Theorem \ref{theo stability} and by \eqref{om cons}, we  have easily that
\begin{equation}\label{om est}
\omega_{n} \text{ is uniformly bounded in }L^\infty(\R^+;L^1\cap L^p(\R^2)).
\end{equation}

\subsection{Biot-Savart decomposition}

We reconstruct here the velocity in terms of the vorticity and the circulations. By results related to the Hodge-De Rham theorem, there exists a unique vector field $u_{n}(t,\cdot)$ which is tangent to the boundary, divergence free, tending to zero at infinity and whose  curl is $\omega_{n}(t,\cdot)$ and circulations $\gamma_{n}$:
\begin{equation}\label{BS}
u_{n}(t,x) = \nabla^\perp \psi_{n}^0(t,x) + \sum_{i=1}^k \alpha_{n}^i(t)  \nabla^\perp \psi_{n}^i(x).
\end{equation}
In this decomposition, $\psi_{n}^0$ satisfies the Dirichlet problem:
\[
\Delta \psi_{n}^0 = \omega_{n} \text{ in }\Omega_{n},\quad \psi_{n}^0\vert_{\partial \Omega_{n}}=0,\quad \psi_{n}^0(x) =\cal{O}(1) \text{ as }x\to \infty
\]
whereas $\psi_{n}^i$ are harmonic functions, that satisfy:
\begin{multline*}
\Delta \psi_{n}^i = 0 \text{ in }\Omega_{n},\  \partial_{\tau} \psi_{n}^i \vert_{\partial \Omega_{n}}=0, \  \oint_{\pa {\cal O^j_n}} \nabla^\perp \psi_{n}^i  \cdot \tau =\delta_{i,j} \text{ for }j=1,\dots,k, \\
  \psi_{n}^i(x) =\frac1{2\pi}\ln |x| + \cal{O}(1) \text{ as }x\to \infty.
\end{multline*} 
As this vector field can be chosen up to a constant, we can further assume  that
\begin{equation*}
\psi_{n}^i=0 \quad \text{on} \quad \partial O_{n}^i.
\end{equation*}
In particular,  the circulation conditions on the $\psi^i_n$ ($i=1,\dots,k$) together with \eqref{circ cons} lead to: 
\begin{equation} \label{defalphain}
\alpha_{n}^i(t) = \gamma^i_n -  \int_{\pa O^i_n}  \na^{\perp} \psi^0_n \cdot \tau, 
\end{equation}
where the circulation at the r.h.s. can be expressed in a weak form similar to  \eqref{weak circu}:
\begin{equation}\label{circu}
\int_{\pa O^i_n}  \na^{\perp} \psi^0_n \cdot \tau = -\int_{\Omega_n} \chi^i \omega_{n} - \int _{\Omega_n}  \na \psi^0_n \cdot \nabla \chi^i \: = \: -\int_{\R^2} \chi^i \omega_{n} - \int _{\R^2}  \na \psi^0_n \cdot \nabla \chi^i.
\end{equation}

\begin{remark}\label{rem infinity}
The behavior at infinity of the stream functions is somehow classical. It  can be deduced from the link between 2D harmonic vector fields and holomorphic functions. Namely,   if $v = \na^\perp \psi$ for a harmonic stream function  $\psi$ in an open set $U$, then the mapping 
$$f \: : \:  z = x + i y \: \mapsto \:  v_1(x,y) - i v_2(x,y) \: (=-\pa_y \psi(x,y)  - i \pa_x \psi(x,y))$$ 
is holomorphic in $U$. The behaviour at infinity of $\psi$ follows from the Laurent expansion of $f$ at $z=\infty$.   We refer to \cite{Kikuchi} for more details on $\psi_{n}^i$, $i \ge 1$. As regards $\psi_{n}^0$, the fact that $\omega_{n}$ is compactly supported  implies that $\psi_{n}^0$ is also harmonic outside a disk. Moreover,  the associated holomorphic function admits a Laurent expansion at infinity  whose  first term is $\mathcal{O}(1/z^2)$, because $\nabla \psi_{n}^0\in L^2(\Omega_{n})$ (Lax-Milgram). We note here that if the  vorticity is no longer compactly supported, the behavior at infinity is far less clear. It explains the difficulties to be met in Subsection \ref{sect conv u0}.
\end{remark}

In the case of only one obstacle ($k=1$) treated in \cite{GV-L}, some explicit formula could be used. It involved  the unique Riemann mapping ${\cal T}_{n}$ which sends the exterior of $O_{n}^1$ to the exterior of the unit disk  and satisfies 
$$\mathcal{T}_n(\infty)=\infty, \quad \mathcal{T}'_n(\infty) > 0 \:  \text{(meaning that  at infinity: $\mathcal{T}_n(z) = \beta z + \mathcal{O}(1)$ with $\beta\in \R^+$)}. $$
In this case, we could write 
\begin{equation}\label{BS formula}
\psi_{n}^0(t,x) = \frac1{2\pi}\int_{(O_{n}^1)^c} \ln\frac{|{\cal T}_{n}(x)-{\cal T}_{n}(y)|}{|{\cal T}_{n}(x)-{\cal T}_{n}(y)^\ast| |{\cal T}_{n}(y)|}\omega_{n}(t,y)\, dy,\quad \psi_{n}^1(t,x)= \frac1{2\pi}\ln |{\cal T}_{n}(x)|,
\end{equation}
with notation $z^\ast=\frac{z}{|z|^2}$. Moreover, we could use  in \cite{GV-L}  a result related to the Caratheodory theorem on  the convergence of $\mathcal{T}_n$:  

\begin{proposition}{\cite[Prop. 15]{GV-L}}\label{Caratheo}
Let $\Pi$ be the unbounded connected component of $\R^2\setminus \mathcal{C}^1$. There is a unique biholomorphism $\mathcal{T}$ from $\Pi$ to $(\overline{B(0,1)})^c$, satisfying $\mathcal{T}(\infty)=\infty$, $\mathcal{T}'(\infty) > 0$. Moreover, as $\overline{O_{n}^1}$ converges to $\cal{C}^1$ (in the Hausdorff sense), one has  the following convergence properties:  
\begin{itemize}
\item[i)] $\mathcal{T}_n^{-1}$ converges uniformly locally to $\mathcal{T}^{-1}$ in $(\overline{B(0,1)})^c$. 
\item[ii)] $\mathcal{T}_n$ (resp. $\mathcal{T}_n'$) converges uniformly locally to $\mathcal{T}$ (resp. to $\mathcal{T}'$) in $\Pi$. 
\item[iii)] $|\mathcal{T}_n|$ converges uniformly locally to $1$ in $\Omega\setminus\Pi$.
\end{itemize}
\end{proposition}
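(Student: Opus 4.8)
The plan is to obtain the existence and uniqueness of $\mathcal{T}$ from the Riemann mapping theorem on the sphere $\widehat{\C}=\R^2\cup\{\infty\}$, and the three convergence statements from the Carath\'eodory kernel convergence theorem; the only geometry-specific work will be to translate the Hausdorff convergence $\overline{O_n^1}\to\mathcal{C}^1$ into the convergence of the open sets $\Pi_n:=\widehat{\C}\setminus\overline{O_n^1}$ to $\Pi$ in the Carath\'eodory kernel sense (with base point $\infty$). For the first point, note that $\widehat{\C}\setminus\Pi$ is $\mathcal{C}^1$ together with the bounded connected components of $\R^2\setminus\mathcal{C}^1$, all of which abut $\mathcal{C}^1$; hence it is compact and connected, and it has at least two points by \eqref{Omegatype2}. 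So $\Pi$ is a simply connected domain of $\widehat{\C}$ with nondegenerate complement, and the Riemann mapping theorem gives a biholomorphism from $\Pi$ onto $\{|z|>1\}\cup\{\infty\}$; post-composing with a suitable M\"obius automorphism of $\{|z|>1\}\cup\{\infty\}$ and then a rotation $z\mapsto e^{i\theta}z$, I can impose $\mathcal{T}(\infty)=\infty$ and $\mathcal{T}(z)=\beta z+\mathcal{O}(1)$ near $\infty$ with $\beta>0$. The automorphisms of $\{|z|>1\}\cup\{\infty\}$ fixing $\infty$ are exactly the rotations $z\mapsto e^{i\theta}z$, which act on $\beta$ by $\beta\mapsto e^{i\theta}\beta$; imposing $\beta>0$ removes this freedom, so $\mathcal{T}$ is unique, and the same construction produces $\mathcal{T}_n$ for the smooth Jordan domains $O_n^1$.

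For the kernel convergence I would argue as follows. If $K\subset\Pi$ is compact then $d(K,\mathcal{C}^1)>0$, so by Hausdorff convergence $\overline{O_n^1}\subset\{\,x:\ d(x,\mathcal{C}^1)<d(K,\mathcal{C}^1)\,\}$ for $n$ large, hence $K\subset\widehat{\C}\setminus\overline{O_n^1}=\Pi_n$ (and $\Pi_n$, being the exterior of a Jordan curve, is connected, so this is the whole set, not merely one component). Conversely, let $D\ni\infty$ be any domain all of whose compact subsets lie in $\Pi_n$ for $n$ large; for any $w\in\mathcal{C}^1$ one has $d(w,\overline{O_n^1})\to0$, so every closed ball centred at $w$ meets $\overline{O_n^1}$ for large $n$ and thus fails to be contained in $\Pi_n$, which forces $D\cap\mathcal{C}^1=\emptyset$; then $D$ is a connected open subset of $\R^2\setminus\mathcal{C}^1$ containing $\infty$, so $D\subseteq\Pi$. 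Hence the kernel of $(\Pi_n)$ at $\infty$ equals $\Pi$, and the same reasoning along any subsequence gives $\Pi_n\to\Pi$ in the Carath\'eodory sense.

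Conjugating by $z\mapsto1/z$ to pass to the standard unit disk, the Carath\'eodory kernel theorem then gives (i) directly: $\mathcal{T}_n^{-1}\to\mathcal{T}^{-1}$ uniformly on compact subsets of $\{|z|>1\}\cup\{\infty\}$. For (ii), given a compact $K\subset\Pi$ I would set $L:=\mathcal{T}(K)\subset\{|z|>1\}$ and use (i) together with Hurwitz's theorem to see that, for $n$ large, $\mathcal{T}_n^{-1}$ is a biholomorphism of a fixed neighbourhood of $L$ onto a neighbourhood of $K$ converging to $\mathcal{T}^{-1}$ there; taking inverses yields $\mathcal{T}_n\to\mathcal{T}$ uniformly on $K$, and then $\mathcal{T}_n'\to\mathcal{T}'$ locally uniformly by the Cauchy integral formula. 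For (iii), fix a compact $K\subset\Omega\setminus\Pi$; as above $K\subset\Pi_n$ for $n$ large, so $\mathcal{T}_n(K)\subset\{|z|>1\}$; if $|\mathcal{T}_n(x_n)|\ge1+\delta$ for some $x_n\in K$ along a subsequence, extract further so that $\mathcal{T}_n(x_n)\to\zeta$ with $|\zeta|\ge1+\delta$, and then (i) gives $x_n=\mathcal{T}_n^{-1}(\mathcal{T}_n(x_n))\to\mathcal{T}^{-1}(\zeta)\in\Pi$, contradicting that $K$ is closed and disjoint from $\Pi$; hence $|\mathcal{T}_n|\to1$ uniformly on $K$.

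I expect the real obstacle to be the identification of the Carath\'eodory kernel with $\Pi$ exactly --- in particular checking that the bounded ``holes'' of $\mathcal{C}^1$ are not absorbed into the kernel --- which is precisely where both the Hausdorff convergence and the connectedness of each $\overline{O_n^1}$ (so that $\Pi_n$ is a single Jordan exterior) are used; once this is established, (i)--(iii) are routine consequences of the kernel theorem and elementary normal-family arguments. If one wished to avoid quoting the kernel theorem, one could instead sketch it here: normality of $(\mathcal{T}_n^{-1})$ on $\{|z|>1\}\cup\{\infty\}$ via Montel's theorem on the sphere, extraction of a subsequential limit $g$, and identification of $g$ with $\mathcal{T}^{-1}$ by showing $g$ is the normalized biholomorphism onto the kernel $\Pi$ (using, again, the description of the kernel obtained above).
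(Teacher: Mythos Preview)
This proposition is not proved in the present paper; it is quoted from \cite[Prop.~15]{GV-L} and used as a black box, so there is no in-paper proof to compare your attempt against.

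That said, your argument is sound and is the natural route (and, given the label \texttt{Caratheo}, almost certainly the one in \cite{GV-L}): the identification of the Carath\'eodory kernel of $(\Pi_n)$ at $\infty$ with $\Pi$ is correct and uses both halves of the Hausdorff convergence exactly where needed; (i) then follows from the kernel theorem after the inversion $z\mapsto 1/z$, (ii) from the standard inversion-of-locally-uniform-univalent-limits argument (Hurwitz plus Cauchy's formula for the derivatives), and (iii) from the contradiction you sketch. One small point to make explicit in (iii): when you extract so that $\mathcal{T}_n(x_n)\to\zeta$, you should work on the Riemann sphere and allow $\zeta=\infty$; in that case (i), applied on a neighbourhood of $\infty$, gives $x_n=\mathcal{T}_n^{-1}(\mathcal{T}_n(x_n))\to\mathcal{T}^{-1}(\infty)=\infty$, which already contradicts the boundedness of $K$, so this case is harmless.
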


This proposition together with the explicit formula \eqref{BS formula} was the key in \cite{GV-L} to get uniform estimates on the velocity. A problem that we solve below is  to extend such estimates to the case of several obstacles: $k>1$. 

\subsection{Harmonic part}

Let us fix $i$ in $\{1,\dots,k\}$, and let us  look for  local uniform estimates on $\psi_{n}^i$. Let $K\Subset \Omega$.  Property \eqref{hausdorff} states that there exists $n_{K}$ such that $K\Subset \Omega_{n}$ for any $\displaystyle n\geq n_{K}$.

Denoting by ${\cal T}_{n}^i$ the unique biholomorphism from $\displaystyle (\overline{O_{n}^i})^c$ to the exterior of the unit disk satisfying $\mathcal{T}_n^i(\infty)=\infty$, $\displaystyle (\mathcal{T}_n^i)'(\infty) > 0$,  we infer from \eqref{BS formula} that
\[
\tilde \psi_{n}^i(x) \: := \: \frac1{2\pi} \ln | {\cal T}_{n}^i(x)|
\]
verifies
\[
\Delta \tilde \psi_{n}^i = 0 \text{ in }(O_{n}^i)^c,\quad \tilde \psi_{n}^i \vert_{\partial O_{n}^i}=0, \quad \oint_{\pa O_{n}^i} \nabla^\perp \tilde\psi_{n}^i \cdot \tau =1, \quad \tilde\psi_{n}^i(x) =\frac1{2\pi}\ln |x|+ \cal{O}(1) \text{ as }x\to \infty.
\]
By this explicit formula  and thanks to Proposition \ref{Caratheo}, it is obvious that we have for any compact subset $K'$ of $\Omega$:
\begin{equation}\label{est tilde psi}
\tilde\psi_{n}^i \text{ and }\nabla \tilde\psi_{n}^i \text{ are bounded uniformly in } x\in K',\ n\geq n_{K'}.
\end{equation}

Next, we introduce $\displaystyle \tilde \chi^i:= 1-\sum_{j\neq i} \chi^j$ (with $\chi^j$ defined in  \eqref{weak circu}), which vanishes in a small neighborhood of $O_{n}^j$ for $j\neq i$. Then, we define
\[
\hat \psi_{n}^i:= \psi_{n}^i - \tilde\psi_{n}^i \tilde\chi^i.
\]
As $\partial_{\tau}  \psi_{n}^i =0$ on $\partial \Omega_{n}$, it follows that $\hat \psi_{n}^i$ is constant on $\partial O_{n}^j$ for any $j=1,\dots ,k$ (in particular $\hat \psi_{n}^i=0$ on $\partial O_{n}^i$),   $\displaystyle \Delta \hat \psi_{n}^i = -2\nabla  \tilde\psi_{n}^i \cdot \nabla \tilde\chi^i - \tilde\psi_{n}^i \Delta \tilde\chi^i$, $\displaystyle \oint_{\partial O_{n}^j} \nabla^\perp  \tilde\psi_{n}^i \cdot \tau \, ds =0$ for all $j=1,\dots,k$, $\hat \psi_{n}^i = \mathcal{O}(1)$ and $\nabla \hat \psi_{n}^i = \mathcal{O}(\frac1{|x|^2})$ at infinity. Thanks to these properties, we can perform an energy estimate:
\[
\| \nabla \hat \psi_{n}^i \|_{L^2(\Omega_{n})}^2 =- \int_{\Omega_{n}} \hat \psi_{n}^i \, \Delta \hat \psi_{n}^i - \sum_{j=1}^k \hat \psi_{n}^i \oint_{\partial O_{n}^j}  \nabla^\perp\hat \psi_{n}^i\cdot \tau \, ds=- \int_{\Omega_{n}} \hat \psi_{n}^i \Delta \hat \psi_{n}^i \leq C \|  \hat \psi_{n}^i \|_{L^2({\rm supp}\ \nabla \tilde\chi^i)} 
\]
where we have used \eqref{est tilde psi} with $K'={\rm supp}\ \nabla \tilde\chi^i$. Thanks to the Dirichlet condition on $\hat \psi_{n}^i$ at $\partial O_{n}^i$, we can use Lemma \ref{Poincare} with $\rho$ such that ${\rm supp}\ \nabla \tilde\chi^i\subset B(0,\rho)$, hence
\begin{equation}\label{est nabla hat psi}
\| \nabla \hat \psi_{n}^i \|_{L^2(\Omega_{n})} \text{ is bounded  uniformly in $n$}.
\end{equation}
Applying again Lemma \ref{Poincare} with $\rho$ such that $K\subset B(0,\rho)$, we deduce that
\[
\|  \hat \psi_{n}^i \|_{H^1( B(0,\rho))} \text{ is bounded  uniformly in $n$},
\]
which implies with \eqref{est tilde psi} that:
\[
\|   \psi_{n}^i \|_{H^1(K)} \text{ is bounded  uniformly in $n$}.
\]

This ends the proof of the local estimate of $ \psi_{n}^i$ away from the boundary:
\begin{equation} \label{uniformboundpsini}
\psi_{n}^i \text{ belongs to }H^1_{\loc}(\Omega)\text{ uniformly in $n$ and $i=1,\dots,k$.}
\end{equation}

Eventually, let us show a uniform $H^1$ bound on $\psi_{n}^i$ near the boundary. To this end, we   introduce a smooth function $\chi$  with compact support, such that  $\chi = 1$ in a neighborhood of all obstacles ${\cal C}^i$.  Then, we consider the function $\chi \, \psi_n^i$, $n$ large enough.  It satisfies 
$$ \Delta (\chi \, \psi^i_n) \: = \:  2  \na \chi \cdot \na \psi^i_n + (\Delta \chi) \, \psi^i_n, $$
from which we deduce 
$$ \int_{\Omega_n} | \na (\chi \psi^i_n)|^2 \: \le \:  \| 2 \na \chi \cdot \na \psi^i_n + \Delta \chi \, \psi^i_n  \|_{L^2 } \, 
\|  \chi \, \psi^i_n \|_{L^2}. $$
Note that there is no  boundary term at $\pa \Omega_n$: indeed, we have $\chi = 1$ in a neighborhood of $\pa \Omega_n$, and 
$$  \int_{\pa \Omega_n} (\chi \psi^i_n) \,  \tau \cdot \na^\perp ( \chi \psi^i_n)   =    \sum_j \psi^i_n\vert_{\pa O^j_n}  \int_{\pa O^j_n}  \tau \cdot \na^\perp \psi^i_n   = 0,$$
by the zero circulation around $O^j_n$ $j\neq i$ and by the Dirichlet condition on $\partial O^i_n$.
Moreover, in the inequality above, the first factor at the r.h.s. is supported away from the boundary of $\Omega_n$.  It is therefore bounded uniformly in $n$, by \eqref{uniformboundpsini}. Extending $\psi^i_n$ inside all obstacles $O^j_n$ by their (constant) values at $\pa O^j_n$ we can apply  Lemma \ref{Poincare} to the second factor to state that $\|  \chi \, \psi^i_n \|_{L^2(\Omega_{n})}\leq \|  \chi \, \psi^i_n \|_{L^2((O_{n}^1)^c)}\leq C  \|  \nabla (\chi \, \psi^i_n) \|_{L^2(\Omega_{n})}$
and we end up with 
$$  \|  \nabla (\chi \, \psi^i_n) \|_{L^2(\Omega_{n})}  \: \le \: C  $$
This yields a uniform control of $\na \psi^i_n$ in $L^2$ in a vicinity of the obstacles, and still by Lemma \ref{Poincare}, a uniform control of $\psi^i_n$ in $H^1$ in a vicinity of the obstacles. Combining with  \eqref{uniformboundpsini}, we get that 
$$ \psi_n^i \text{ is bounded  uniformly in $n$ in } H^1_{\loc}(\overline{\Omega}), \quad i=1,\dots, k.$$  

Actually, if we consider the extension of $\psi^i_n$ inside all obstacles $O^j_n$ by their (constant) values at $\pa O^j_n$, Lemma \ref{Poincare} gives that 
$$ \psi_n^i \text{ is bounded  uniformly in $n$ in } H^1_{\loc}(\R^2), \quad i=1,\dots, k.$$

\subsection{Kernel part}

As before,  to get estimates on $\psi_{n}^0$ in the formula \eqref{BS}, we introduce the similar problem in the case of only one obstacle (see \eqref{BS formula}): let $\tilde \psi_{n}^0$ defined by
\begin{equation}\label{tilde-psi}
\tilde\psi_{n}^0(t,x) := \frac1{2\pi}\int_{(O_{n}^1)^c} \ln\frac{|{\cal T}_{n}(x)-{\cal T}_{n}(y)|}{|{\cal T}_{n}(x)-{\cal T}_{n}(y)^\ast| |{\cal T}_{n}(y)|} \omega_{n}(t,y)\, dy
\end{equation}
which verifies
\[
\Delta \tilde\psi_{n}^0 =  \omega_{n} \text{ in }(O_{n}^1)^c,\quad \tilde\psi_{n}^0\vert_{\partial O_{n}^1}=0,\quad \tilde\psi_{n}^0(x) =\cal{O}(1) \text{ as }x\to \infty.
\]

\subsubsection{Uniform estimates} We do not assume that $\omega^0$ is compactly supported and that $p>2$ (case studied in \cite{GV-L}).

In this part, we establish a local uniform estimate of $\tilde\psi_{n}^0$ far away the boundary, thanks to the explicit formula \eqref{tilde-psi} and Proposition \ref{Caratheo}.

\begin{lemma}\label{uniform BS} For any compact subset $K'$ of $\R^2\setminus {\cal C}^1$, we have
\[
 \tilde \psi_{n}^0  \text{ is bounded  uniformly in $n$, $t$ and }x\in K'.
\] 
\end{lemma}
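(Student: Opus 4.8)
The goal is to bound $\tilde\psi_n^0$ uniformly in $n$, $t$ and $x \in K'$ using the explicit formula \eqref{tilde-psi} and the Carathéodory-type convergence of Proposition \ref{Caratheo}. The starting point is the uniform bound \eqref{om est}: $\omega_n$ is uniformly bounded in $L^\infty(\R^+; L^1 \cap L^p(\R^2))$. The plan is to estimate the integral kernel
$$ G_n(x,y) \: := \: \frac1{2\pi} \ln\frac{|{\cal T}_{n}(x)-{\cal T}_{n}(y)|}{|{\cal T}_{n}(x)-{\cal T}_{n}(y)^\ast|\, |{\cal T}_{n}(y)|} $$
in a way that is integrable against $L^1 \cap L^p$ densities, uniformly in $n$ and in $x \in K'$, and then conclude by Hölder's inequality. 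Since $x$ ranges over a compact $K' \subset \R^2 \setminus {\cal C}^1$ while $y$ ranges over all of $(O_n^1)^c = \R^2$ (after extension by zero), one must control the kernel in three regimes: $y$ near $x$, $y$ far from $x$ (at infinity), and $y$ near the obstacle.

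First I would record that, by Proposition \ref{Caratheo}(ii), ${\cal T}_n(K')$ is contained in a fixed compact subset of $\overline{B(0,1)}^c$ for $n$ large, i.e. $|{\cal T}_n(x)| \ge 1 + c_0$ and $|{\cal T}_n(x)| \le R_0$ uniformly for $x \in K'$; also $|{\cal T}'_n|$ is bounded above and below on $K'$, so ${\cal T}_n$ is bi-Lipschitz on a neighborhood of $K'$, uniformly in $n$. The denominator $|{\cal T}_n(x) - {\cal T}_n(y)^\ast|$ is bounded below: since $|{\cal T}_n(x)| \ge 1 + c_0 > 1 \ge |{\cal T}_n(y)^\ast|$, we get $|{\cal T}_n(x) - {\cal T}_n(y)^\ast| \ge c_0$ uniformly. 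Likewise $|{\cal T}_n(y)| \ge 1$. So the only possible singularities of $G_n(x,y)$ are (a) where $|{\cal T}_n(x) - {\cal T}_n(y)|$ is small, which by the uniform bi-Lipschitz property near $K'$ forces $y$ close to $x$ and gives $|\ln |{\cal T}_n(x)-{\cal T}_n(y)|| \le C |\ln|x-y|| + C$ — a locally $L^{p'}_y$ function uniformly in $x$; and (b) the behaviour as $|y| \to \infty$. For (b), one expands: as $|y| \to \infty$, ${\cal T}_n(y) = \beta_n y + O(1)$, ${\cal T}_n(y)^\ast \to 0$, so $G_n(x,y) = \frac1{2\pi}\ln\frac{|{\cal T}_n(x) - {\cal T}_n(y)|}{|{\cal T}_n(x)| \,|{\cal T}_n(y)|} + \frac1{2\pi}\ln\frac{|{\cal T}_n(x)|}{|{\cal T}_n(x) - {\cal T}_n(y)^\ast|}$; the second term is bounded, and the first is $\frac1{2\pi}\ln\bigl|\frac{{\cal T}_n(x)}{{\cal T}_n(y)} - \frac{{\cal T}_n(x)}{{\cal T}_n(y)}\cdot\frac{{\cal T}_n(y)}{{\cal T}_n(x)} \cdot \dots\bigr|$ — more cleanly, $\bigl|\,|{\cal T}_n(x) - {\cal T}_n(y)| - |{\cal T}_n(y)|\,\bigr| \le |{\cal T}_n(x)| \le R_0$, so $\ln\frac{|{\cal T}_n(x)-{\cal T}_n(y)|}{|{\cal T}_n(y)|} = \ln(1 + O(R_0/|{\cal T}_n(y)|))$ stays bounded (both above and below, using $|{\cal T}_n(y)| \ge 1$), while $\ln|{\cal T}_n(x)|$ is bounded; hence $G_n(x,y)$ is bounded for $|y|$ large, uniformly in $n$ and $x \in K'$. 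The remaining region — $y$ in a bounded set but possibly near the obstacle — is where $|{\cal T}_n(y)| \to 1$; here $G_n(x,y)$ is a bounded continuous function of $({\cal T}_n(x), {\cal T}_n(y))$ on a compact piece of $\overline{B}^c \times \overline{B}^c$ away from the diagonal, hence bounded; near $\partial O_n^1$ we should check the logarithm $\ln|{\cal T}_n(y)|$ in the denominator does not blow up — but since it appears with a minus sign and $|{\cal T}_n(y)| \ge 1$, that term is $\le 0$, so it only helps for the upper bound, and for the lower bound one splits off $-\frac1{2\pi}\ln|{\cal T}_n(y)|$ and notes it is $L^{p'}$-integrable in $y$ near $\partial O_n^1$ uniformly in $n$ by the bi-Lipschitz behaviour of ${\cal T}_n$ there.

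Putting this together: write $\tilde\psi_n^0(t,x) = \int_{\R^2} G_n(x,y)\,\omega_n(t,y)\,dy$ and split $\R^2 = A_x \cup B$ where $A_x = \{|y - x| \le 1\}$ (or $|{\cal T}_n(x) - {\cal T}_n(y)| \le 1/2$, say) and $B$ its complement. On $B$, $|G_n(x,y)| \le C$ uniformly, so that contribution is $\le C\|\omega_n(t,\cdot)\|_{L^1} \le C\|\omega^0_n\|_{L^1} \le C$. On $A_x$, $|G_n(x,y)| \le C + C|\ln|x-y||$, and by Hölder $\int_{A_x} |G_n(x,y)|\,|\omega_n(t,y)|\,dy \le \|G_n(x,\cdot)\|_{L^{p'}(A_x)}\|\omega_n(t,\cdot)\|_{L^p} \le C\|\omega^0_n\|_{L^p} \le C$, since $|\ln|x-y||\in L^{p'}_{\loc}$ with a norm uniform in $x$ (this is where $p > 1$, equivalently $p' < \infty$, is used). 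This gives the claimed uniform bound. The main obstacle — the step requiring the most care — is the control near the obstacle and at infinity simultaneously with $y$ unbounded: one has to be sure the kernel estimates are genuinely uniform in $n$, which is exactly what Proposition \ref{Caratheo} (parts (i)--(iii), giving locally uniform convergence of ${\cal T}_n$, ${\cal T}_n^{-1}$ and $|{\cal T}_n|$) is there to supply; the delicate point is that $y$ near $\partial O_n^1$ corresponds, after the map, to $|{\cal T}_n(y)|$ near $1$, and one must use the uniform bi-Lipschitz behaviour of ${\cal T}_n$ (via convergence of ${\cal T}_n^{-1}$ and a barrier/Koebe-type distortion bound) rather than pointwise convergence alone, since the region degenerates with $n$.
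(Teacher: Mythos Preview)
Your approach is essentially the same as the paper's: both start from the explicit Green's function formula \eqref{tilde-psi}, use Proposition~\ref{Caratheo} to get uniform two-sided bounds on $|\mathcal{T}_n(x)|$ for $x\in K'$, split the $y$-domain into a region where the kernel is bounded (handled by the $L^1$ bound on $\omega_n$) and a region where the $\ln$-singularity is felt (handled by H\"older against $\|\omega_n\|_{L^p}$), and conclude via~\eqref{om est}. The paper partitions according to the value of $|\mathcal{T}_n(y)|$ (three annuli $A_1,A_2,A_3$ in the image), while you partition according to proximity to $x$; this is only bookkeeping.

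One point deserves care. For the $L^{p'}$ estimate on the singular piece, the paper changes variables $\xi=\mathcal{T}_n(y)$ and uses the uniform convergence of $(\mathcal{T}_n^{-1})'$ on a \emph{fixed} annulus bounded away from $\partial B(0,1)$, which is exactly what Proposition~\ref{Caratheo}(i) provides. Your route instead invokes a ``uniform bi-Lipschitz'' property of $\mathcal{T}_n$ to replace $|\ln|\mathcal{T}_n(x)-\mathcal{T}_n(y)||$ by $|\ln|x-y||$. This works provided you take the second of your two options, $A_x=\{|\mathcal{T}_n(x)-\mathcal{T}_n(y)|\le \delta\}$ with $\delta<c_0$, so that $\mathcal{T}_n(A_x)$ stays in a fixed compact of $\{|z|>1\}$; with the first option $A_x=\{|y-x|\le 1\}$ the set may meet a neighbourhood of $\partial O_n^1$ where no uniform distortion control is available. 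Relatedly, your aside that ``$-\tfrac1{2\pi}\ln|\mathcal{T}_n(y)|$ is $L^{p'}$-integrable near $\partial O_n^1$ uniformly in $n$ by the bi-Lipschitz behaviour of $\mathcal{T}_n$ there'' is not correct in general (think of $\mathcal{C}^1$ a Jordan arc, where $\mathcal{T}_n'$ blows up); fortunately your final argument does not use this, since for $x\in K'$ and $|\mathcal{T}_n(y)|$ close to $1$ the full kernel is already bounded.
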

\begin{proof}
For $K'$ fixed, there exists $n_{K'}$ such that $K'\subset \R^2\setminus \overline{O_{n}^1}$ (see \eqref{hausdorff}). We will prove the uniform estimate thanks to the formula \eqref{tilde-psi}.

First, we recall that $|{\cal T}_{n}(y)^\ast|\leq 1$ whereas ${\cal T}_{n}$ converges uniformly to ${\cal T}$ on $K'$ (see Proposition \ref{Caratheo}). Moreover, there exists $C_{1} =C_{1}({\cal T}, K')$ such that $1+1/C_{1} \leq|{\cal T}(x)|\leq 1+C_{1}$ on $K'$. Therefore, there exists $N$ such that we have for all $x\in K'$, $n\geq N$:
\[
|{\cal T}_{n}(x)-{\cal T}_{n}(y)^\ast| \leq |{\cal T}_{n}(x)|+|{\cal T}_{n}(y)^\ast|\leq 1+2C_{1}+1
\]
and
\[
|{\cal T}_{n}(x)-{\cal T}_{n}(y)^\ast| \geq |{\cal T}_{n}(x)|- |{\cal T}_{n}(y)^\ast|\geq 1+\frac{1}{2C_{1}}-1
\]
and in turn
\begin{eqnarray*}
\frac1{2(1+C_{1})}\Big(1- \frac{|{\cal T}_{n}(x)|}{ |{\cal T}_{n}(y)|}\Big) \leq &\dfrac{|{\cal T}_{n}(x)-{\cal T}_{n}(y)|}{|{\cal T}_{n}(x)-{\cal T}_{n}(y)^\ast| |{\cal T}_{n}(y)|} &\leq2C_{1}\Big(1+ \frac{|{\cal T}_{n}(x)|}{ |{\cal T}_{n}(y)|}\Big)\\
\frac1{2(1+C_{1})}\Big(1- \frac{1+2C_{1}}{ |{\cal T}_{n}(y)|}\Big) \leq &\dfrac{|{\cal T}_{n}(x)-{\cal T}_{n}(y)|}{|{\cal T}_{n}(x)-{\cal T}_{n}(y)^\ast| |{\cal T}_{n}(y)|} &\leq 2C_{1}\Big(1+ \frac{1+2C_{1}}{1}\Big)
\end{eqnarray*}
for all $x\in K'$, $n\geq N$. We split $(O_{n}^1)^c$ in three parts: 
$$
A_{1}:={\cal T}_{n}^{-1}(B(0,1+1/(4C_{1}))\setminus B(0,1)),\ A_{2}:={\cal T}_{n}^{-1}(B(0,2(1+2C_{1}))\setminus B(0,1+1/(4C_{1}))),$$
$$ A_{3}:={\cal T}_{n}^{-1}(B(0,2(1+2C_{1}))^c).
$$
In the third subdomain, it is obvious that $|{\cal T}_{n}(y)|\geq 2(1+2C_{1})$ and then
\[
 \Big| \int_{A_{3}} \ln\frac{|{\cal T}_{n}(x)-{\cal T}_{n}(y)|}{|{\cal T}_{n}(x)-{\cal T}_{n}(y)^\ast| |{\cal T}_{n}(y)|} \omega_{n}(t,y)\, dy\Big| \leq C\| \omega_{n}(t,\cdot)\|_{L^1}
 \]
 for any $x\in K'$ and $n\geq N$. In the first subdomain, it is clear that $|{\cal T}_{n}(x)-{\cal T}_{n}(y)|\geq |{\cal T}_{n}(x)|-|{\cal T}_{n}(y)|\geq 1/(4C_{1})$ for all $n\geq N$, hence 
\[
\frac1{2(1+C_{1})}\frac{1/(4C_{1})}{1+1/(4C_{1})} \leq \dfrac{|{\cal T}_{n}(x)-{\cal T}_{n}(y)|}{|{\cal T}_{n}(x)-{\cal T}_{n}(y)^\ast| |{\cal T}_{n}(y)|} \leq 4C_{1}(1+C_{1})
\]
which implies that
\[
 \Big| \int_{A_{1}} \ln\frac{|{\cal T}_{n}(x)-{\cal T}_{n}(y)|}{|{\cal T}_{n}(x)-{\cal T}_{n}(y)^\ast| |{\cal T}_{n}(y)|} \omega_{n}(t,y)\, dy\Big| \leq C\| \omega_{n}(t,\cdot)\|_{L^1}.
\]
In the second subdomain, we note that
\[
\Big|  \ln\frac{|{\cal T}_{n}(x)-{\cal T}_{n}(y)|}{|{\cal T}_{n}(x)-{\cal T}_{n}(y)^\ast| |{\cal T}_{n}(y)|} \Big| \leq C + \Big|\ln{|{\cal T}_{n}(x)-{\cal T}_{n}(y)|}\Big|
\]
so
\[
 \Big| \int_{ A_{2}} \ln\frac{|{\cal T}_{n}(x)-{\cal T}_{n}(y)|}{|{\cal T}_{n}(x)-{\cal T}_{n}(y)^\ast| |{\cal T}_{n}(y)|} \omega_{n}(t,y)\, dy\Big| \leq C\| \omega_{n}(t,\cdot)\|_{L^1}+\| \omega_{n}(t,\cdot)\|_{L^p} \| \ln |{\cal T}_{n}(x)-{\cal T}_{n}(y)| \|_{L^{p'}(A_{2})}
\]
with $p'$ the conjugate of $p$: $1=\frac1p+\frac1{p'}$. The last norm is easy to estimate by changing variable $\xi={\cal T}_{n}(y)$:
\begin{eqnarray*}
\| \ln |{\cal T}_{n}(x)-{\cal T}_{n}(y)| \|_{L^{p'}(A_{2})}^{p'} &=& \int_{B(0,2(1+2C_{1}))\setminus B(0,1+1/(4C_{1}))} |\ln |{\cal T}_{n}(x)-\xi||^{p'} |\det D{\cal T}_{n}^{-1}(\xi)|\, d\xi\\
&\leq&C \int_{B(0,3(1+2C_{1}))} |\ln |\eta||^{p'} \, d\eta\leq C
\end{eqnarray*}
where we have used that $({\cal T}_{n}^{-1})'$ converges uniformly to $({\cal T}^{-1})'$ on $B(0,2(1+2C_{1}))\setminus B(0,1+1/(4C_{1}))$ (see Proposition \ref{Caratheo} and the Cauchy formula) and that $({\cal T}^{-1})'$ is smooth on this annulus.

Putting together all the estimates  gives that for all $n\geq N$ we have
\[
\| \tilde\psi_{n}^0(t,\cdot)\|_{L^\infty(K')} \leq C (\| \omega_{n}(t,\cdot)\|_{L^1}+\| \omega_{n}(t,\cdot)\|_{L^p})
\]
and \eqref{om est} allows us to end the proof.
\end{proof}

If there is only one obstacle, $\tilde\psi_{n}^0=\psi_{n}^0$ and the previous lemma gives a uniform estimate of $\psi_{n}^0$. Let us now extend this estimate  to the case of several obstacles ($k \geq 2$). We introduce
\[
\psi_{n}:= \psi^0_{n}- \tilde \psi_{n}^0,
\]
which verifies:
\begin{equation}\label{psi harmonic}
\Delta \psi_{n} =  0 \text{ in }\Omega_{n},\quad \psi_{n}\vert_{\partial \Omega_{n}}=-\tilde\psi_{n}^0\vert_{\partial \Omega_{n}},\quad \psi_{n}(x) =\cal{O}(1) \text{ as }x\to \infty.
\end{equation}
To derive a uniform control on $\psi^0_n$, we rely on the following maximum principle in unbounded domains:
\begin{lemma}\label{max princ}
Let  $\psi_n$ the harmonic function verifying \eqref{psi harmonic}. Then, 
\[
|\psi_n (t,x) | \leq \sup_{\partial \Omega_{n}} |\tilde\psi_{n}^0(t,\cdot) |, \quad \forall \, t, \: \forall x\in \Omega_{n}.
\]
\end{lemma}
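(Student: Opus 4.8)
The obvious strategy is to compare $\psi_n$ with the barrier functions $\pm M$, where $M := \sup_{\partial\Omega_n}|\tilde\psi_n^0(t,\cdot)|$, which is finite for each $t$ and $n$ since $\tilde\psi_n^0$ is continuous and $\partial\Omega_n$ is compact (it is a finite union of smooth Jordan curves). First I would record that $v_n := \psi_n - M$ is harmonic on $\Omega_n$, satisfies $v_n \le 0$ on $\partial\Omega_n$, and is bounded at infinity by \eqref{psi harmonic}. The goal is to show $v_n \le 0$ throughout $\Omega_n$; applying the same argument to $-\psi_n - M$ then yields $|\psi_n| \le M$ everywhere.

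\textbf{The main obstacle is the unboundedness of $\Omega_n$}: the classical maximum principle applies on bounded domains, and on an exterior domain a bounded harmonic function need not attain its supremum on the boundary unless one controls its behaviour at infinity. Here we only know $\psi_n = \mathcal{O}(1)$ at infinity (not that it decays), so I cannot simply discard the point at infinity. The standard device is to introduce, for $\varepsilon > 0$, the function $w_\varepsilon(x) := v_n(x) + \varepsilon \, g_n(x)$, where $g_n$ is a nonnegative harmonic function on $\Omega_n$ that tends to $+\infty$ as $|x| \to \infty$ — for instance a suitable positive combination of the harmonic stream functions $\psi_n^i$ (each of which grows like $\tfrac{1}{2\pi}\ln|x|$), or more concretely the function $\tfrac{1}{2\pi}\ln|\mathcal{T}_n(x)|$ associated to one obstacle, which is nonnegative on $\Omega_n$, harmonic, vanishes on $\partial O_n^1$, and diverges logarithmically at infinity. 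One checks $g_n \ge 0$ on $\Omega_n$ (so $w_\varepsilon \le v_n + \varepsilon g_n$ is still $\le \varepsilon \sup_{\partial\Omega_n} g_n$ on the boundary, and in fact $\le 0$ on the part of $\partial\Omega_n$ where $g_n$ vanishes; more carefully one may shift $g_n$ by a constant so it is nonnegative and the boundary contribution is controlled).

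\textbf{The argument then runs as follows.} Fix $\varepsilon > 0$. Since $v_n$ is bounded above on $\Omega_n$ and $g_n(x) \to +\infty$, there is $R = R(\varepsilon)$ large enough that $w_\varepsilon(x) \le 0$ for all $|x| = R$ (for $x \in \Omega_n$). On the bounded domain $\Omega_n \cap B(0,R)$ the function $w_\varepsilon$ is harmonic, continuous up to the boundary, and $\le 0$ on the whole boundary $\partial(\Omega_n \cap B(0,R)) = (\partial\Omega_n) \cup (\Omega_n \cap \partial B(0,R))$ — using $v_n \le 0$ and (after the harmless shift) $g_n \le 0$ or suitably bounded on $\partial\Omega_n$, and $w_\varepsilon \le 0$ on $\partial B(0,R)$. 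By the classical maximum principle, $w_\varepsilon \le 0$ on $\Omega_n \cap B(0,R)$; since $R$ can be taken arbitrarily large for fixed $\varepsilon$, we get $v_n + \varepsilon g_n \le 0$ on all of $\Omega_n$. Finally, letting $\varepsilon \to 0^+$ at each fixed $x \in \Omega_n$ gives $v_n(x) \le 0$, i.e. $\psi_n(t,x) \le \sup_{\partial\Omega_n}|\tilde\psi_n^0(t,\cdot)|$. Replacing $\psi_n$ by $-\psi_n$ yields the reverse inequality and hence the stated bound $|\psi_n(t,x)| \le \sup_{\partial\Omega_n}|\tilde\psi_n^0(t,\cdot)|$ for all $t$ and all $x \in \Omega_n$. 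The only delicate bookkeeping is to arrange the auxiliary function $g_n$ (and its additive constant) so that its boundary values do not spoil the sign of $w_\varepsilon$ on $\partial\Omega_n$; choosing $g_n = \tfrac{1}{2\pi}\ln|\mathcal{T}_n(x)|$ (nonnegative, vanishing on $\partial O_n^1$) and noting it is bounded on the compact set $\partial\Omega_n$ handles this, since the boundary contribution $\varepsilon \sup_{\partial\Omega_n} g_n$ also vanishes as $\varepsilon \to 0$ and can be absorbed uniformly.
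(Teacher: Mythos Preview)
Your overall strategy---a Phragm\'en--Lindel\"of barrier argument with a logarithmically growing comparison function---is sound and is a legitimate alternative to the paper's approach. However, there is a sign error that makes the argument, as written, fail: you set $w_\varepsilon = v_n + \varepsilon g_n$ with $g_n \ge 0$ and $g_n(x) \to +\infty$, and then claim that $w_\varepsilon \le 0$ on $|x|=R$ for $R$ large. This is backwards: since $v_n$ is bounded and $\varepsilon g_n \to +\infty$, one has $w_\varepsilon \to +\infty$ at infinity, not $-\infty$. The correct barrier is $w_\varepsilon := v_n - \varepsilon g_n$. With this sign, $w_\varepsilon \to -\infty$ at infinity (so $w_\varepsilon \le 0$ on $\partial B(0,R)$ for large $R$), and on $\partial\Omega_n$ one has $v_n \le 0$ and $g_n = \tfrac{1}{2\pi}\ln|\mathcal T_n| \ge 0$, hence $w_\varepsilon \le 0$ there as well---no ``harmless shift'' or absorption is needed at all. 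The classical maximum principle on $\Omega_n \cap B(0,R)$ then gives $w_\varepsilon \le 0$ everywhere, and sending $\varepsilon \to 0$ yields $v_n \le 0$. The rest of your write-up (symmetry in $\pm\psi_n$, choice of $g_n$) is fine.

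For comparison, the paper takes a different route: it applies the inversion $\mathbf{i}(x)=x/|x|^2$ (centered inside one of the obstacles) to map $\Omega_n$ onto a bounded domain $\widetilde\Omega_n$. Since planar harmonic functions are preserved under conformal maps, $g_n := \psi_n(t,\cdot)\circ\mathbf{i}^{-1}$ is harmonic on $\widetilde\Omega_n\setminus\{0\}$; the fact that $\psi_n$ has a limit at infinity (both $\psi_n^0$ and $\tilde\psi_n^0$ do, by Remark~\ref{rem infinity}) makes $g_n$ continuous at the origin, hence harmonic there, and the ordinary maximum principle on the bounded set $\widetilde\Omega_n$ gives the result directly. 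Both proofs are short; the inversion avoids the $\varepsilon$-limit entirely, while your (corrected) barrier argument stays in the original domain and uses only the boundedness $\psi_n=\mathcal O(1)$ rather than the existence of a limit at infinity.
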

\begin{proof}[Proof of Lemma \ref{max princ}] The idea is to use an inversion mapping in order to send the unbounded domain to a bounded one. Without loss of generality, let us assume that $B(0,\rho_n)\subset O_{n}^1$ with some $\rho_n >0$, then the inversion map $\mathbf{i}(x):=x/|x|^2$ maps $\Omega_{n}$ to a bounded domain $\widetilde \Omega_{n}$ included in $B(0,1/\rho_n)$. Such a function has some interesting properties which can be found e.g. in \cite[Lem. 3.7]{LLL}. In particular, from the properties of $\psi_n(t,\cdot)$ (namely, the limit at infinity and the harmonicity), one can check that (for any fixed $t$) the function $g_n := \psi_n(t,\cdot) \circ \mathbf{i}^{-1}$ verifies:
\[
\Delta g_n =  0 \text{ in }\widetilde\Omega_{n},\quad g_n \vert_{\partial \widetilde\Omega_{n}}=\psi_n(t,\cdot) \circ \mathbf{i}^{-1}\vert_{\partial \Omega_{n}}.
\]
Of course, a key point is that $\psi_n(t,\cdot)$ has a limit at infinity, so that after inversion, $g_n$ is continuous at zero. It is then harmonic in a vicinity of zero, for instance because it satisfies there the mean-value formula.  Finally, the lemma follows from the standard maximum principle:
\[
\sup_{\Omega_{n}} |\psi_n(t,\cdot)| = \sup_{\widetilde\Omega_{n}} |g_n|   = \sup_{\pa \widetilde\Omega_{n}} |g_n| =   \sup_{\partial \Omega_{n}} |\tilde\psi_{n}^0(t,\cdot)|.
\]
\end{proof}

By the Hausdorff convergence and the disjointness of the obstacles, there exists a compact set $K$ of $\R^2\setminus \cal{C}^1$ and $N$ such that  $\partial O_{n}^i \subset K$ for all $i=2,\dots,k$ and  $n\geq N$. Hence, by Lemma \ref{uniform BS} and $\tilde\psi_{n}^0\vert_{\partial O_{n}^1}=0$, we infer that  $\sup_{\partial \Omega_{n}} |\tilde\psi_{n}^0|$ is uniformly bounded. Therefore, for any compact subset $K'$ of $\Omega_{n}$, Lemmas \ref{uniform BS} and \ref{max princ} imply that
\begin{equation}\label{psi-0 uniform}
\psi_{n}^0  \text{ is bounded  uniformly in $n$, $t$ and }x\in K'.
\end{equation}

\subsubsection{$H^1$ estimates}
From \eqref{psi-0 uniform} we deduce   a local $H^1$ estimate  away from the boundary. Indeed, for any compact subset $K'$ of $\Omega_{n}$, we can combine \eqref{om est}, \eqref{psi-0 uniform} with elliptic regularity for the equation   $\Delta \psi_{n}^0=\omega_{n}$ in $K'$. We get that
\begin{equation*}
 \psi_{n}^0(t,\cdot)  \text{ belongs to $W^{2,p}(K')$  uniformly in $n$, $t$}.
\end{equation*}
which obviously gives  the local $H^1$ estimate  away from the boundary:
\begin{equation} \label{uniformboundpsin0}
\psi_{n}^0(t,\cdot) \text{ belongs to }H^1_{\loc}(\Omega)\text{ uniformly in $n$ and $t$.}
\end{equation}

Then, let us show a uniform $H^1$ bound near the boundary. As for the harmonic function, we   use a smooth function $\chi$  with compact support, such that  $\chi = 1$ in a neighborhood of all obstacles ${\cal C}^i$.  The functions $\chi \, \psi_n^0$ is compactly supported and satisfies 
$$ \Delta (\chi \, \psi^0_n) \: = \:  \chi \omega_{n} + 2  \na \chi \cdot \na \psi^0_n + (\Delta \chi) \, \psi^0_n, \quad \chi \, \psi_n^0 \vert_{\partial\Omega_{n}}=0,$$
from which we infer 
$$ \int_{\Omega_n} | \na (\chi \psi^0_n)|^2 \: \le \: \|\omega_{n}\|_{H^{-1}(\supp \chi)}\|  \chi \, \psi^0_n \|_{H^1}  + \| 2 \na \chi \cdot \na \psi^0_n + \Delta \chi \, \psi^0_n  \|_{L^2 }  \|  \chi \, \psi^0_n \|_{L^2}. $$
For $n$ large enough, $\nabla \chi$ is supported away from the boundary of $\Omega_n$.  Therefore, we can use \eqref{om est},  \eqref{uniformboundpsin0} and Lemma \ref{Poincare} to conclude that 
$$ \| \na (\chi \psi^0_n)(t,\cdot) \|_{L^2(\Omega_{n})}  \: \le \: C  .$$
Still by Lemma \ref{Poincare}, we deduce a uniform control of $\psi^0_n$ in $H^1$ in a vicinity of the obstacles. Combining with  \eqref{uniformboundpsin0}, we get that 
$$ \psi_n^0(t,\cdot) \text{ is bounded  uniformly in $n$ and $t$ in } H^1_{\loc}(\overline{\Omega}),$$  
hence extending by zero
$$ \psi_n^0(t,\cdot) \text{ is bounded  uniformly in $n$ and $t$ in } H^1_{\loc}(\R^2).$$

\section{Compactness} \label{sect 4}
By the uniform estimates of the previous section, we can assume up to a subsequence that  
$$ \psi^i_n \rightharpoonup \psi^i \textrm{ weakly in  }  H^1_{\loc}(\R^2), \quad i=0,\dots,k $$
and
$$ \psi^0_n \rightharpoonup \psi^0 \textrm{ weakly-* in  } L^\infty(\R_+, H^1_{\loc}(\R^2)). $$

Here, we have implicitly extended the streamfunctions $\psi^i_n$ inside all obstacles $O^j_n$ by their (constant) values at $\pa O^j_n$. 
Also, extending $\omega_n$ by zero, we can assume that
$$ \omega_n \rightharpoonup \omega \text{ weakly-* in  } L^\infty(\R_+, L^1 \cap L^p(\R^2)).   $$ 
As a by-product, we can  obtain the convergence of the $\alpha^i_n$ weakly in $L^\infty(\R_{+})$,  see \eqref{defalphain}-\eqref{circu}:
\begin{equation} \label{eq_alphain}
 \alpha^i_n  \rightharpoonup \gamma^i(u^0) + \int_{\R^2} \chi^i \omega + \int _{\R^2}  \na \psi^0 \cdot \nabla \chi^i  =:\alpha^i.
\end{equation}
 where we have used assumption c) in Theorem \ref{theo stability}. Finally, back to \eqref{BS}, we obtain the weak-* convergence of $u_n$ to a limit field $u$ in $L^\infty(\R_+, L^2_{\loc}(\R^2))$.  This vector field has a  decomposition of the type:
$$
u=\nabla^\perp \psi^0 + \sum_{i=1}^k \alpha^i \nabla^\perp \psi^i.
$$
It is clear that
\[
\div u =0 \quad\text{and}\quad \curl u = \omega \quad \text{in}\quad \cal{D}'(\R_{+} \times \Omega).
\]

\medskip
To conclude the proof of Theorem \ref{theo stability}, we still need: 
\begin{itemize}
\item to prove that  $u$ satisfies \eqref{imperm2}.
\item to prove convergence of the approximate initial data $u^0_n$ to $u^0$. 
\item to prove that $u$ satisfies the momentum equation \eqref{Eulerweak}. 
\end{itemize}

\begin{remark} If we consider initial vorticities such that $\| \omega^0_{n} \|_{L^p(\Omega_{n})}\leq \| \omega^0\|_{L^p(\Omega)}$ (see \eqref{approx om}), then taking the liminf of the relation $\|\omega_{n}\|_{L^\infty(\R^+,L^q(\R^2))} = \|\omega_{n}^0\|_{L^q(\R^2)}$ we get \eqref{finalenergy}.
\end{remark}

\medskip
\subsection{Tangency condition} \label{subsection_tangency}
Let $h \in H^1_{\loc}(\Omega)$, such that $\na h \in L^2(\Omega)$ and $h(x) = 0$ for large $x$. We must prove that $\int_{\Omega}u \cdot \na h = 0$. Let $B$ be a ball containing the obstacles and the support  of $h$. We shall prove that 
\begin{enumerate}
\item For any $\chi \in {\cal D(\R^2)}$ with $\chi = 1$ near all obstacles, for almost all $t$
$$ \chi \, \psi^0(t,\cdot) \in H^1_0(\Omega).$$ 
\item There exist constants $C^{i,j}$, $i,j=1,\dots,k$ such that for any  $\chi^j \in {\cal D}(\R^2)$ with $\chi^j= 1$ near ${\cal C}^j$ and $\chi^j = 0$ near the other obstacles, 
$$ \chi^j (\psi^i - C^{i,j}) \in H^1_0(\Omega).$$
\end{enumerate}
These statements easily imply the result. Indeed, take $\chi = 1$ near $B$, $\chi^j$ such that $\sum \chi^j = 1$ near $B$. Then, we can write 
$$ u \: = \: \na^\perp  \left( \chi \psi^0  + \sum_{i,j} \alpha^i \, \chi^j (\psi^i - C^{i,j}) \right) + \sum_{i,j} \alpha^i C^{i,j} \na^\perp \chi^j   \quad \text{ over } B. $$
 Note that the vector field in the second term of the r.h.s. vanishes identically near the obstacles.  Moreover,   the first term of the r.h.s. belongs to $H^1_0(\Omega)$ for a.e. $t$. For these times $t$, we introduce  $\psi_n \in {\cal D}(\Omega)$ such that 
 $$ \psi_n \: \rightarrow  \: \chi \psi^0  + \sum_{i,j} \alpha^i \, \chi^j (\psi^i - C^{i,j})  \quad \text{ in } H^1(\Omega) $$
 and defining 
 $$v_n := \na^\perp \psi_n  +  \sum_{i,j}  \alpha^i C^{i,j}  \na^\perp \chi^j $$
 we get  $u = \lim v_n $ in $L^2(B)$, where $v_n$ is divergence-free and zero near all obstacles. Finally, 
 $$ \int_\Omega u \cdot \na h = \lim_{n \rightarrow +\infty} \int_\Omega v_n \cdot \na h = 0$$
 by a standard integration by parts.   Let us now indicate how to prove properties (1) and (2).
 
 \medskip
 (1) The proof relies on the notion of $\gamma$-convergence of open sets. It was already the key ingredient in our former paper \cite{GV-L}, and we refer to Appendix C in this paper for a reminder.  Let $\Omega'$ a big open disk such that $\text{supp} \, \chi$ and all obstacles are included in 
$\Omega'$. As the number of connected components of $\displaystyle \R^2 \setminus (\Omega' \cap \Omega_n)$  remains constant, 
 $$ \Omega' \cap \Omega_n \: \text{ $\gamma$-converges to } \: \Omega' \cap \Omega, \quad \text{as $n \rightarrow +\infty$}.$$
 Let now $\varphi = \varphi(t) \in L^1(\R_+)$. The function 
$$ f^\varphi_n  \: :=  \:  \int_{\R_+} \varphi(s) \chi  \psi^0_n(s,\cdot) ds$$
 belongs to $H^1_0(\Omega' \cap \Omega_n),$
and its extension by zero converges weakly in $H^1(\R^2)$ to 
$$ f^\varphi \: := \:  \int_{\R_+} \varphi(s)  \chi \psi^0(s,\cdot) ds. $$
 By the previous $\gamma$-convergence property, it follows that $f^\varphi \in H^1_0(\Omega' \cap \Omega)$ and so $f^\varphi \in H^1_0(\Omega)$ (remember that $\chi$ is supported in $\Omega'$). Taking $\varphi  = \frac{1}{2r}1_{[t-r, t+ r ]}$ for any $0 < r < t$, we find that 
 $$ F(t) \: := \: \frac{1}{2r} \, \int_{t-r}^{t + r}  \chi  \psi^0(s,\cdot) ds  \:  \in \: H^1_0(\Omega) $$
for any $0 < r < t$. For all Lebesgue points $t$ of $s \mapsto  \psi^0(s,\cdot)$ (in particular for a.e. $t$)   we get:
 $$ \chi \psi^0(t,\cdot) = \lim_{r \rightarrow 0}     \frac{1}{2r} \, \int_{t-r}^{t + r}   \chi  \psi^0(s,\cdot) ds  \: \in H^1_0(\Omega). $$

\medskip
(2) We know that $\psi^i_n$ is constant at $\pa O^j_n$, and denote this constant  by $C^{i,j}_n$.

\medskip
We claim that $C^{i,j}_n$  is bounded in $n$. Indeed, let $\Omega'$ a bounded open set such that $\text{supp} \, \chi^j \Subset \Omega'$, disjoint from the obstacles $\displaystyle O^k_n$, $k\neq j$.   The stream function $\psi^i_n$ satisfies the Dirichlet problem
$$ \Delta \psi^i_n = 0 \: \text{ in } \Omega' \setminus O^j_n, \quad \psi^i_n\vert_{\pa O^j_n} = C^{i,j}_n, \quad \psi^i_n\vert_{\pa \Omega'} = \varphi^i_n $$
where $\varphi^i_n$ ($ \: := \: \psi^i_n\vert_{\pa \Omega'}$) can be seen as a given data, uniformly bounded in $H^{1/2}(\pa \Omega')$ (because $\psi^i_n$ is bounded  in $\displaystyle W^{2-1/p,p}(\pa \Omega')$). Then, we can write 
$$ \psi^i_n \: = C^{i,j}_n \phi^n \: + \: \psi^n $$
where 
$ \phi^n$ and $\psi^n$ are defined as the solutions of the following systems: 
$$ \Delta \phi^n = 0 \: \text{ in } \Omega' \setminus O^j_n, \quad \phi^n\vert_{\pa O^j_n} = 1, \quad \phi^n\vert_{\pa \Omega'} = 0 $$
and 
$$ \Delta \psi^n = 0 \: \text{ in } \Omega' \setminus O^j_n, \quad \psi^n\vert_{\pa O^j_n} = 0, \quad \psi^n\vert_{\pa \Omega'} = \varphi^i_n.  $$
A standard energy estimate yields that $\psi^n$ is bounded in $H^1(\Omega')$ (after extension by zero in $O^j_n$). Hence, $\displaystyle  C^{i,j}_n \phi^n = \psi^i_n - \psi^n$ is also bounded in $H^1(\Omega')$. In particular, the sequence of real numbers $C^{i,j}_n \| \na \phi^n \|_{L^2(\Omega')}$ is bounded uniformly in $n$.  Finally, arguing along the lines of \cite[pages 139-141]{GV-L}, one can show that 
$$ \liminf_{n \rightarrow +\infty}  \| \na \phi^n \|_{L^2(\Omega')}  > 0 $$
(this positive lower limit being related to the positive capacity of the obstacle ${\cal C}^j$). Thus, $C^{i,j}_n$ is bounded in $n$. 

\medskip
Then, clearly, $\chi^j (\psi^i_n - C^{i,j}_n)$ is bounded in $H^1_0(\Omega' \cap \Omega_n)$ and up to a subsequence, its extension by zero  weakly converges to $\chi^j (\psi^i - C^{i,j})$ in $H^1_{\loc}(\R^2)$. We conclude as in (1) (it is even simpler here, as there is no time dependence).

\subsection{Convergence of the initial data.}\label{sect conv u0}
The bounds and weak convergence results that we have described for $u_n(t,\cdot)$ are in particular true at $t=0$. Thus, up to  a subsequence, $u^0_n \rightharpoonup \tilde u^0$ weakly in $L^2_{\loc}(\R^2)$, where $\tilde u^0$ has the form: 
\begin{equation} \label{formuletildeu0}
 \tilde u^0(x) \: := \: \na^\perp \psi^{0,0}(x) \: + \:  \sum_{i=1}^k \alpha^{i,0} \na^\perp \psi^i(x).  
 \end{equation}
The goal of this subsection is to prove that $\tilde u^0 = u^0$. 
We remind that, up to subsequences:
$$ \psi^{0,0} = \lim_{n \rightarrow +\infty} \psi^0_n\vert_{t=0}, \quad \psi^i = \lim_{n \rightarrow +\infty} \psi^i_n, \quad i \ge 1, $$
 where convergence holds weakly in $H^1_{\loc}$. Moreover, as in \eqref{eq_alphain} we have
 $$ \alpha^{i,0} \: = \: \gamma^i(u^0) + \int_{\R^2} \chi^i \omega^{0} + \int _{\R^2}  \na \psi^{0,0} \cdot \nabla \chi^i.
  $$

 \medskip
We claim that $\gamma^i(\tilde u^0) = \gamma^i(u^0)$. First, we recall that $\psi^{0,0}\vert_{\pa \Omega} = 0$, in the sense of Subsection \ref{subsection_tangency}: 
$$ \chi \psi^{0,0} \in H^1_0(\Omega).$$
In particular, 
\begin{equation*}  
 \alpha^{i,0} \: = \: \gamma^i(u^0) + \int_{\Omega} \chi^i \omega^0 + \int _{\Omega}  \na \psi^{0,0} \cdot \nabla \chi^i = \gamma^i(u^0) - \gamma^i(\nabla^\perp \psi^{0,0}),
 \end{equation*}  
 where the weak circulation is defined in \eqref{weak circu}. 
From this identity and formula \eqref{formuletildeu0}, it is enough to show that 
\begin{equation} \label{circuharmonic}
\gamma^i(\na^\perp \psi^i) \: = \delta_{ij}, \quad \text{ that is } \:  \:    \int_\Omega \na \psi^i \cdot \na \chi^j =    - \delta_{ij}.
\end{equation}
With the same notations as in Subsection \ref{subsection_tangency},  we compute
\begin{equation*}
 \int_\Omega \na \psi^i \cdot \na \chi^j  = \int_\Omega \na \left( \psi^i - C^{i,j} \right) \cdot \na \chi^j  
   = \int_{\R^2} \na \left(\psi^i - C^{i,j} \right) \cdot \na \chi^j = \int_{\R^2} \na \psi^i  \cdot \na \chi^j. 
 \end{equation*}
 Meanwhile, with the same kind of computations 
 \begin{align*}
  \delta_{ij} & = \oint_{\pa O^j_n} \na^\perp \psi^i_n \cdot \tau = - \int_{\Omega_n} \na \psi^i_n \cdot \na \chi^j  \\
  & = - \int_{\R^2} \na \left(\psi^i_n - C^{i,j}_n \right) \cdot \na \chi^j = -   \int_{\R^2} \na \psi^i_n  \cdot \na \chi^j 
  \rightarrow -   \int_{\R^2} \na \psi^i  \cdot \na \chi^j \: \text{ as } \: n \rightarrow +\infty .
  \end{align*}
Combining the last equalities, we find \eqref{circuharmonic}. 

\medskip
We conclude that $\gamma(\tilde u^0) = \gamma(u^0)$. Moreover,  we have clearly
$$ \div \tilde u^0 =0 \quad\text{and}\quad \curl \tilde u^0 = \omega^{0} \quad \text{in}\quad \cal{D}'(\Omega).$$
Also, the reasoning of the previous subsection can be applied to show that  $\tilde u^0$ verifies the tangency condition \eqref{imperm}.

\medskip
To be able to conclude that $\tilde u^0 = u^0$, we still need to deal with the behavior of these vector fields  at infinity. By assumption \eqref{initialdata} we know that $u^0\to 0$ as $|x|\to +\infty$, but we do not have yet any information about $\tilde u^0$. Therefore, we establish the following lemma:
\begin{lemma}
There exists an harmonic function $h$ on $\Omega$ such that:
\[
\tilde u^0 = K_{\R^2}[\omega^0]+h \quad \text{and}\quad h(x)=\mathcal{O}(1/|x|) \ \text{when}\ x\to\infty,
\]
where $K_{\R^2}[\omega^0]$ is the usual Biot-Savart formula in the full plane: $K_{\R^2}[\omega^0]=\frac{x^\perp}{2\pi|x|^2}\ast\omega^0$.
\end{lemma}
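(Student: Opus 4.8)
The plan is to understand $\tilde u^0$ as a Biot--Savart field plus a harmonic correction, and to pin down the behavior at infinity of that correction. Write $K_{\R^2}[\omega^0] = \frac{x^\perp}{2\pi |x|^2} \ast \omega^0$; since $\omega^0 \in L^1 \cap L^p(\R^2)$ this is a well-defined vector field on all of $\R^2$, it is divergence free with $\curl K_{\R^2}[\omega^0] = \omega^0$, and by standard potential-theoretic estimates it decays like $\mathcal O(1/|x|)$ at infinity (the key point being $\omega^0 \in L^1$, which gives the leading $\frac{1}{2\pi}\frac{x^\perp}{|x|^2}\int \omega^0$ behavior, itself $\mathcal O(1/|x|)$). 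Then set $h := \tilde u^0 - K_{\R^2}[\omega^0]$ on $\Omega$. By the identities already established in this subsection, $\div \tilde u^0 = 0$ and $\curl \tilde u^0 = \omega^0$ in $\mathcal D'(\Omega)$, so $\div h = 0$ and $\curl h = 0$ in $\mathcal D'(\Omega)$; hence $h$ is (componentwise) harmonic on $\Omega$ — more precisely $h = \na^\perp g$ for a harmonic stream function $g$, using the link between harmonic vector fields and holomorphic functions recalled in Remark \ref{rem infinity}.

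The substantive part is the decay estimate $h(x) = \mathcal O(1/|x|)$ at infinity, which I would extract from the explicit structure \eqref{formuletildeu0}, namely $\tilde u^0 = \na^\perp \psi^{0,0} + \sum_i \alpha^{i,0} \na^\perp \psi^i$. For the harmonic pieces $\na^\perp \psi^i$, $i \geq 1$: each $\psi^i$ is the weak limit of $\psi^i_n$, and by Remark \ref{rem infinity} and Proposition \ref{Caratheo} (the case $k=1$ comparison field $\tilde\psi^i_n = \frac{1}{2\pi}\ln|\mathcal T^i_n(\cdot)|$ together with the harmonic remainder $\hat\psi^i_n$ which has $\na\hat\psi^i_n \in L^2$ and hence decays like $\mathcal O(1/|x|^2)$) one gets that $\na^\perp\psi^i(x) = \frac{1}{2\pi}\frac{x^\perp}{|x|^2} + \mathcal O(1/|x|^2)$ at infinity, in particular $\mathcal O(1/|x|)$. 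For the kernel piece $\na^\perp\psi^{0,0}$: here $\psi^{0,0} = \lim \psi^0_n\vert_{t=0}$, and one has to argue that $\na^\perp\psi^{0,0} - K_{\R^2}[\omega^0]$ decays like $\mathcal O(1/|x|)$. The cleanest route is to observe that outside a large ball $B_R$ containing all obstacles, $\na^\perp\psi^{0,0}$ is a harmonic vector field on $\R^2 \setminus B_R$ with $\na\psi^{0,0} \in L^2$ there (from the uniform $H^1_{\loc}$ bounds passing to the limit, or rather from the energy bound that $\na\psi^0_n$ is uniformly $L^2$ near infinity — which itself needs care), hence its associated holomorphic function has a Laurent expansion starting at order $1/z$; combined with the fact that $\curl(\na^\perp\psi^{0,0}) = \omega^0$ in the sense of distributions, the $1/z$ coefficient is forced to be $\frac{1}{2\pi i}\int_{|x|=R} \na^\perp\psi^{0,0}\cdot\tau = \frac{1}{2\pi i}\big(\int_{B_R}\omega^0 + \text{fluxes through }\pa O^i\big)$, and subtracting $K_{\R^2}[\omega^0]$ (whose $1/z$ coefficient accounts for $\int\omega^0$) leaves, modulo the constant-multiples-of-$\frac{x^\perp}{|x|^2}$ absorbed by the $\na^\perp\psi^i$ terms through the $\alpha^{i,0}$, a remainder that is $\mathcal O(1/|x|^2)$. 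Bundling all of this, $h = \tilde u^0 - K_{\R^2}[\omega^0]$ is a sum of terms each $\mathcal O(1/|x|)$ (and the leading $\frac{x^\perp}{|x|^2}$ singular parts must cancel because the total $\curl$ is $\omega^0$ with $\int_{\R^2}\omega^0$ matched by $K_{\R^2}[\omega^0]$ alone), so $h = \mathcal O(1/|x|)$.

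The main obstacle I anticipate is precisely this last bookkeeping at infinity: one must track how the coefficients $\alpha^{i,0}$ are chosen (via \eqref{eq_alphain}, i.e. $\alpha^{i,0} = \gamma^i(u^0) + \int\chi^i\omega^0 + \int\na\psi^{0,0}\cdot\na\chi^i$) and verify that the $\mathcal O(1/|x|)$ \emph{coefficients} of the various $\frac{x^\perp}{|x|^2}$ contributions — coming from $K_{\R^2}[\omega^0]$, from each $\na^\perp\psi^i$, and from the far-field of $\na^\perp\psi^{0,0}$ — combine into the single prescribed value rather than producing a genuine $\mathcal O(1/|x|)$-but-nonzero leftover that would obstruct matching $\tilde u^0$ with $u^0$. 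Concretely one needs that $\sum_i \alpha^{i,0} \cdot 1 + (\text{leading coeff of }\na^\perp\psi^{0,0}) = \int_{\R^2}\omega^0$, which should follow from the conservation of the weak circulations $\gamma^i$ (already proved in this subsection: $\gamma^i(\tilde u^0) = \gamma^i(u^0)$) together with $\curl$ and a contour-integral argument on a large circle. Once the decay of $h$ is in hand, the lemma is proved; and it then immediately gives $\tilde u^0 \to 0$ at infinity, so that $\tilde u^0 - u^0$ is a harmonic, divergence-free, curl-free field on $\Omega$, tangent to $\pa\Omega$ in the weak sense, decaying at infinity, hence — by uniqueness of the Hodge--De Rham decomposition with prescribed circulations, using $\gamma^i(\tilde u^0) = \gamma^i(u^0)$ — identically zero, i.e. $\tilde u^0 = u^0$.
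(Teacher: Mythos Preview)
Your approach has a genuine gap that stems from forgetting the central difficulty of this paper: $\omega^0$ is \emph{not} compactly supported. Two of your key claims fail for exactly this reason.

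First, the assertion that $K_{\R^2}[\omega^0] = \mathcal O(1/|x|)$ at infinity is not true for general $\omega^0 \in L^1\cap L^p$. The asymptotic $K_{\R^2}[\omega^0](x) \sim \frac{1}{2\pi}\frac{x^\perp}{|x|^2}\int\omega^0$ that you invoke is valid only when $\omega^0$ is compactly supported (or decays fast enough). For merely $L^1\cap L^p$ data, the best one obtains is $K_{\R^2}[\omega^0]\in L^{q^*}$ with $q^*>2$ via Hardy--Littlewood--Sobolev; there is no pointwise $\mathcal O(1/|x|)$ decay in general. Second, you claim that $\na^\perp\psi^{0,0}$ is a harmonic vector field outside a large ball $B_R$, in order to run a Laurent expansion. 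But $\Delta\psi^{0,0}=\omega^0$ in $\Omega$, and since $\omega^0$ is not compactly supported, $\na^\perp\psi^{0,0}$ is \emph{not} harmonic in $\R^2\setminus B_R$ for any $R$. (You in fact contradict yourself a few lines later when writing $\curl(\na^\perp\psi^{0,0})=\omega^0$.) The parenthetical worry you flag about $\na\psi^{0,0}\in L^2$ near infinity is also justified: the available $H^1_{\loc}$ bounds are local and do not give this.

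The paper's proof avoids all of this by working at the approximate level, where $\omega^0_n$ \emph{is} compactly supported. There, both $u^0_n$ and $K_{\R^2}[\omega^0_n]$ are genuinely $\mathcal O(1/|x|)$ (Remark~\ref{rem infinity}), so $h_n:=u^0_n-K_{\R^2}[\omega^0_n]$ is harmonic on $\Omega_n$ with a Laurent expansion starting at order $1/z$. One then passes to the limit: Hardy--Littlewood--Sobolev gives $K_{\R^2}[\omega^0_n]\to K_{\R^2}[\omega^0]$ in $L^2_{\loc}$ (using $\omega^0_n\to\omega^0$ in $L^q$ for some $q\in(1,2)$), hence $h_n\rightharpoonup h$ weakly in $L^2_{\loc}$; harmonicity upgrades this to $C^0_{\loc}(\Omega)$ convergence via the mean-value property; and since the Laurent coefficients of $h_n$ are integrals over a fixed large circle, they pass to the limit and force $h=\mathcal O(1/|x|)$. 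The whole point is that the decay is inherited from the approximations, not read off directly from $\tilde u^0$ and $\omega^0$.
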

\begin{proof}
As $\omega_{n}^0$ is compactly supported, it is obvious that $K_{\R^2}[\omega^0_{n}] =\mathcal{O}(1/|x|)$, depending on the size of the support of $\omega_{n}^0$. Moreover, still using the compact support of $\omega_n$, we have noticed in Remark \ref{rem infinity} that $u_{n}^0=\mathcal{O}(1/|x|)$. Therefore, $h_{n}:=u_{n}^0 - K_{\R^2}[\omega^0_{n}]$  behaves as $\mathcal{O}(1/|x|)$ at infinity. Moreover, it is a harmonic vector field on $\Omega_n$. This means that it is divergence-free and curl-free, or equivalently that  
$\displaystyle  x + i y \mapsto h_{n,1}(x,y) - i h_{n,2}(x,y)$ is holomorphic over $\Omega_n$.   Now, by assumption b) in Theorem  \ref{theo stability}, we know that there is $q\in (1,2)$ such that  $\|\omega_{n}^0-\omega^0\|_{L^q(\R^2)}\to 0$. So the Hardy-Littlewood-Sobolev theorem (see e.g. \cite[Theo. V.1]{Stein} with $\alpha=1$) implies that:
\[
\| K_{\R^2}[\omega_{n}^0-\omega^0] \|_{L^{q^*}(\R^2)} \leq C_{q} \|\omega_{n}^0-\omega^0\|_{L^{q}(\R^2)}\to 0,
\]
where $\frac1{q}=\frac1{q^*}+\frac12$. As $q^*$ belongs to $(2,\infty)$, we get that
\[
K_{\R^2}[\omega_{n}^0] \to K_{\R^2}[\omega^0] \text{ strongly in } L^2_{\loc}(\R^2).
\]
Therefore, $h_{n}$ converges weakly  in $L^2_{\loc}(\R^2)$ to $h:=\tilde u^0 -K_{\R^2}[\omega^0]$. From  the harmonicity of $h_n$ and $h$ (identifying these harmonic fields with their holomorphic counterparts),   standard application of the mean-value theorem yields  that $h_{n}\to h$ strongly in $C^0_{\loc}(\Omega)$. We can write a Laurent expansion for $h$, and as we can compute the coefficients of this expansion from the value of $h$ on any circle $\partial B(0,R)$ (with $R$ large enough), the strong limit implies that $h=\mathcal{O}(1/|x|)$. This ends the proof of the lemma.
\end{proof}

Notice from the previous proof that  $K_{\R^2}[\omega^0]-K_{\R^2}[\omega_{n}^0]$ belongs to $L^{q^*}(\R^2)$ for some $q^*>2$. For $n$ fixed, it is also clear that $K_{\R^2}[\omega_{n}^0]\in L^{q^*}(\R^2)$, hence  $K_{\R^2}[\omega^0]$ belongs to $L^{q^*}(\R^2)$. Eventually, 
$\tilde u^0$ is the sum of a function in $L^{q^*}(\R^2)$ and a function $h$ which goes to zero at infinity. 

\medskip
We can  now show that $\tilde u^0 = u^0$ As the difference $v^0:=\tilde u^0-u^0$ is curl free and has zero circulation, we infer from the same arguments as in \cite[page 145]{GV-L} that $v^0=\nabla p^0$ for some smooth $p^0$ inside $\Omega$. Moreover, $v^0$ is the sum of a function  which tends to zero at infinity plus a function in $L^{q^*}(\R^2)$. From  the  harmonicity  of $v^0$ and the mean value theorem, it is easy to prove that $v^0$ goes to zero at infinity. Again,  $\bar{v}:=v^0_{1}-iv^0_{2}$ is holomorphic and admits a Laurent expansion $\sum_{n\in \Z} c_{k} z^{-k}$ where $c_{k} = \frac{1}{2i\pi}\int_{\partial B(0,R)} \bar{v}z^{k-1}\, dz$. First, taking the limit $R\to \infty$ in $|c_{k}|\leq  \|\bar v\|_{L^\infty(\partial B(0,R))} R^k$, we infer that $c_{k}=0$ for any $k\leq 0$. Second, we use the standard formula 
$$
2i\pi c_{1}=\int_{\partial B(0,R)} (v^0_{1}-iv^0_{2})  \, dz =  \int_{ \partial B(0,R)} v^0 \cdot \tau \, ds  -i \int_{ \partial B(0,R)} v^0\cdot n \, ds.
$$
The first right hand side integral vanishes because $v^0$ is a gradient.  The second is also zero because of the tangency condition \eqref{imperm}.  Indeed let $\chi=\sum \chi^i$ a smooth cutoff function which is equal to $1$ in a neighborhood of $\partial \Omega$ and zero on $\partial B(0,R)$ (for $R$ large enough) then
\[
\int_{ \partial B(0,R)} v^0\cdot n \, ds = \int_{B(0,R)\cap\Omega} v^0\cdot \nabla (1-\chi)=  -\int_{\Omega} v^0\cdot \nabla \chi=0.
\]
Therefore, we have  $c_{1}=0$ and $v^0 = \mathcal{O}(1/|x|^2)$. In particular, it implies that $v^0$ belongs to 
\[
G(\Omega) := \{ w\in L^2(\Omega),\ w=\nabla h \text{ for some }h\in H^1_{\loc}(\Omega)\}.
\]
Eventually, we claim that $v^0 \in G(\Omega)^\perp$. Indeed, let $h \in G(\Omega)$, and $\chi \in C^\infty_c(\R^2)$ such that $\chi = 1$ on a big open  ball $B$ containing all obstacles. By condition  \eqref{imperm}, 
$$ \int_\Omega  v \cdot \na h \: = \: \int_\Omega  v \cdot \na (\chi h) \: + \: \int_{\Omega}   v \cdot \na ((1 - \chi) h) \: = \: \int_{B^c} v \cdot \na g $$
where $g := (1-\chi) h$ satisfies: $g \in H^1_{\loc}(B^c)$, $\na g \in L^2(B^c)$, $g = 0$ over $\pa B$  in the trace sense. We deduce from \cite[Theorem II.7.3, page 104]{galdi}  that there exists $g_n \in C^\infty_c(\overline{B}^c)$ such that $\na  g_n \rightarrow \na g$ in $L^2(B^c)$. From there: 
$$   \int_{B^c} v \cdot \na g = \lim_{n \rightarrow +\infty}  \int_{\Omega} v \cdot \na g_n = 0 $$
still by \eqref{imperm}. 

\medskip
Hence, $v^0 = 0$,  $\tilde u^0 = u^0$, so that $u^0_n \rightharpoonup u^0$ weakly in $L^2_{\loc}(\R^2)$. 

\subsection{Convergence in the momentum equation.} Again, the arguments of \cite[page 158]{GV-L} can be applied {\it stricto sensu}.  In short, we take some smooth domain $\Omega' \Subset \Omega$, and decompose 
$$ u_n \: = \: P_{\Omega'} u_n \: + \: \na q_n \quad \text{ in } \Omega' $$
where $P_\Omega'$ is the Leray projector over divergence-free fields, tangent to $\pa \Omega'$. As $u_n$ is already divergence free, the remaining term is the gradient of a harmonic function $q_n$. 

\medskip
Using the momentum equation for $u_n$, we then get easily a uniform bound on $\pa_t P_{\Omega'} u_n$ in some negative Sobolev space. The strong compactness of $P_{\Omega'} u_n$ follows by Aubin-Lions lemma. Eventually, to pass to the limit in the convective term $\div(u_n \otimes u_n)$, we must show that the annoying weak product $\div(\na q_n \otimes \na q_n)$ converges to $0$ when integrated against smooth divergence-free fields $\varphi$ with compact support in $\Omega'$. This convergence follows from the algebraic identity 
$$ \int_{\Omega'} \na q \otimes \na q : \na \varphi = -  \int_{\Omega'} \frac{1}{2} \na |\na q|^2 \cdot \varphi \: + \:  \Delta q \, \na q \cdot \varphi = 0 $$
valid for all harmonic functions $q$. For all details, see \cite[page 158]{GV-L}.

\bigskip

\noindent
 {\bf Acknowledgements.} The authors are partially supported by the Project ``Instabilities in Hydrodynamics'' financed by Paris city hall (program ``Emergences'') and the Fondation Sciences Math\'ematiques de Paris.{ The authors are also grateful to Thierry de Pauw for the short proof of Proposition \ref{prop:equivalence}.}

\def\cprime{$'$}

\end{document}